\documentclass[11pt,english,a4paper]{amsart}
\pdfoutput=1

\usepackage[utf8]{inputenc}
\usepackage[T1]{fontenc}
\usepackage{lmodern}
\usepackage{babel}
\usepackage{amsmath,amssymb,amsthm}
\usepackage[marginratio={1:1,1:1},totalwidth=400pt,totalheight=600pt]{geometry}
\usepackage{microtype}
\usepackage{hyperref}
\usepackage{url}
\usepackage{tikz-cd}
\usepackage{float} 

\theoremstyle{plain}
\newtheorem{theorem}{Theorem}[section]
\newtheorem{lemma}[theorem]{Lemma}
\newtheorem{proposition}[theorem]{Proposition}
\newtheorem{corollary}[theorem]{Corollary}

\theoremstyle{definition}
\newtheorem{definition}[theorem]{Definition}

\newtheorem{remark}[theorem]{Remark}
\newtheorem{question}[theorem]{Question}

\theoremstyle{remark}

\newcommand{\Z}{\mathbb{Z}}

\newcommand{\F}{\mathbb{F}}

\newcommand{\Trn}{{\rm Tr}_n}

\newcommand{\Trm}{{\rm Tr}_m}
\newcommand{\cardinality}[1]{\# #1}
\newcommand\len{{\rm len}}

\setlength{\marginparwidth}{2.2cm}
\usepackage[colorinlistoftodos,prependcaption,textsize=tiny]{todonotes}
\usepackage[justification=centering]{caption}

\title[Permutation rotation-symmetric S-boxes]{Permutation rotation-symmetric S-boxes, liftings and affine equivalence} 


\author[Omland]{Tron Omland}
\address{Digital Defense Department\\Norwegian National Security Authority\\NO-1306 Sandvika\\Norway}
\email{tron.omland@gmail.com}

\author[St\u anic\u a]{Pantelimon St\u anic\u a}
\address{Department of Applied Mathematics, Naval Postgraduate School,
	Monterey, CA 93943-5212, U.S.A.; }
\email{ pstanica@nps.edu}

\date{\today}

\keywords{Boolean functions, vectorial Boolean, S-boxes, rotation-symmetric, shift-invariant, lifting, circulant matrices, affine equivalence}

\begin{document}
	
	\begin{abstract}
		In this paper, we investigate permutation rotation-symmetric (shift-invariant) vectorial Boolean functions 
		on $n$~bits that are liftings from Boolean functions on $k$~bits, for $k\leq n$.
		These functions generalize the well-known map used in the current Keccak hash function, which is generated via the Boolean function on $3$ variables, $x_1+(x_2+1)x_3$.
		
		We provide some general constructions, and also study the affine equivalence between rotation-symmetric S-boxes and describe the corresponding relationship between the Boolean function they are associated with. 
	\end{abstract}

	\maketitle
	
	\section{Introduction and motivation}
	
	One of the most basic primitives in symmetric cryptography is an S-box, or a ``substitution box'', which, mathematically, is a  map from the set of $n$-bit vectors to the set of $m$-bit vectors. Symmetric ciphers are often made up as a combination of S-boxes and only a few other operations that are usually linear. For example, the substitution-permutation networks are all of this type, including the current block cipher standard, AES. Therefore, as the main nonlinear part, S-boxes play a central role in providing the confusion to the robustness of ciphers, and therefore, the security of a cipher often relies heavily on the cryptographic properties of the S-box involved.
	
	Since lookup tables tend to have a large implementation cost, using an S-box with an easy description is favorable. Let $\F_2^n$ denote the vector space  of $n$-bits, and let $F\colon\F_2^n\to\F_2^n$ be an S-box and $S$ be the (right) shift, that is, $S(x_1,x_2,\ldots,x_n)=(x_n,x_1\ldots,x_{n-1})$. Then $F$ is rotation-symmetric (or shift-invariant) if $F\circ S=S\circ F$, and such functions can be completely determined by a Boolean function $f\colon\F_2^n\to\F_2$. Therefore, permutation  rotation-symmetric S-boxes with good cryptographic properties are candidates to be used as primitives in symmetric ciphers.
	In particular, they are interesting for designing lightweight cryptography.
	
	A Boolean function $f$ on $k$~bits determines a rotation-symmetric S-box $F$ on $n$~bits, for $k\leq n$, by
	\[
	F(x_1,x_2,\dotsc,x_{n})=\big(f(x_1,x_2,\dotsc,x_{k}),f(x_2,x_3,\dotsc,x_{k+1}), \dotsc,f(x_n,x_1,\dotsc,x_{k-1})\big),
	\]
	and when $F$ is a bijection we call such S-box a {\em $(k,n)$-lifting}. The case where $k<n$ is especially interesting and the motivating example is the function $\chi(x_1,x_2,x_3)=x_1+(x_2+1)x_3$ (studied in Daemen's thesis~\cite{JDA-thesis}), which gives rise to $(3,n)$-liftings for all odd $n\geq 3$. It has acceptable cryptographic properties and is used in the current hash function Keccak~(see~\cite{mariot2021evolutionary,legendre-symbol}).  
	
	Permutation rotation-symmetric S-boxes and liftings can also be viewed as reversible cellular automata, which are certain dynamical systems on the space of bi-infinite strings of $0$'s and $1$'s, thought of as cells. They are defined by local updates rules, depending on a finite number of neighboring cells and uniformly applied to all cells at the same time. Reversible cellular automata have many applications in physics and biology, typically for simulation of microsystems.
	
	Even though rotation-symmetric S-boxes (or cellular automata) are characterized by simple rules, designing them so that they become permutations is a difficult problem. There are only a few known classes and a limited number of available theoretical results. On the other hand, previous works (and existing computational data) shows that this is still a very rich class. 
	
	In this paper, we discuss various questions and techniques related to producing new classes of $(k,n)$-liftings, and methods to determine whether a candidate is in fact a lifting. We believe that finding the number of $(k,n)$-liftings is in general hard when $k<n$, but we explain how it can be computed for $k=n$. However, when $k$ is small, computer experiments provide a bit of information. Several aspects related to these questions are considered and some results are given in the first few sections. Section~\ref{sec2} contains some background on Boolean functions. Section~\ref{sec3} introduces the reader to cellular automata and rotation-symmetric liftings and shows a result on local invertibility. Section~\ref{sec4} contains remarks and results on the number of bijections and Section~\ref{liftings} includes many open questions of interest in this area. In Section~\ref{sec6} we find bounds on the degree of the generating Boolean function of a lifting and bounds   for the number of $(3,n)$-liftings (attainable) and $(4,n)$-liftings and some computational results.
	
	In Section~\ref{new classes} we present two families of S-boxes, one that consists of $(k,k+2)$-liftings for all odd $k$, and one that consists of $(k,n)$-liftings for all $4\leq k\leq n$. The latter has a description by so-called conserved landscapes. We also present a few theoretical results and bounds. An analysis of liftings that give rise to S-boxes with good cryptographic properties and cost-efficient implementation shows a trade-off among the sizes of $k$ and $n$, the number of nonzero terms, algebraic degree, nonlinearity, and differential uniformity.
	
	Even though the families we present are not necessarily applicable in cryptography,  the techniques used and the partial answers obtained give insight to the hardness of the problem of constructing lifting, which also has an interest from a purely combinatorial viewpoint. Perhaps modifications of our methods, or other classes of liftings may provide (overall) good classes of cryptographic primitives.
	
	Finally, let $F$ and $G$ be two permutation rotation-symmetric S-boxes determined by Boolean functions $f$ and $g$, respectively. In Section~\ref{affine-section}, we investigate conditions for when $F$ and $G$ are affine equivalent, and in particular, what relationship this corresponds to between $f$ and $g$. Some of our observations and results illustrate that the topic is more subtle than previous papers suggest. Section~\ref{sec:comp} contains some computations, and we conclude the paper in Section~\ref{last_comments}. Most of this analysis is conducted in a slightly more general setting than rotation-symmetry, for the so-called cyclic S-boxes, and in particular for $k$-shift-invariant bijections (both notions are explained below).

	\section{Background on Boolean functions}
	\label{sec2}
	For a positive integer $n$, we let $\F_{2^n}$ denote the  finite field with $2^n$ elements, and $\F_{2^n}^*=\F_{2^n}\setminus\{0\}$ (for $a\neq 0$, we write $\frac{1}{a}$ to mean the inverse of $a$ in the considered finite field). Further, let $\F_2^m$ denote the $m$-dimensional vector space over $\F_2$.
	We call a function from $\F_{2^n}$ to $\F_2$  a {\em Boolean function} on $n$ variables.
	The cardinality of a set $S$ is denoted by $\cardinality{S}$.
	For $f\colon\F_{2^n}\to \F_2$ we define the {\em Walsh-Hadamard transform} to be the integer-valued function
	$\displaystyle
	W_{f}(u)  = \sum_{x\in \F_{2^n}} (-1)^{f(x)-\Trn(u x)}, \ u \in \mathbb{F}_{2^n},
	$
	where   $\Trn\colon\F_{2^n}\to \F_2$ is the absolute trace function, given by $\Trn(x)=\sum_{i=0}^{n-1} x^{2^i}$. If $f$ is defined over the vector space $\F_2^n$, we then replace the trace by the scalar product $u\cdot x$ of $u,x\in\F_2^n$.
	
	Given a   Boolean function $f$, the derivative of $f$ with respect to~$a \in \F_{2^n}$ is the Boolean function
	$
	D_{a}f(x) =  f(x + a)+ f(x), \mbox{ for  all }  x \in \F_{2^n}.
	$
	
	For positive integers $n$ and $m$, any map $F\colon\F_2^n\to\F_2^m$ is called a {\em vectorial Boolean function}, or {\em  $(n,m)$-function}. When $m=n$, $F$ can be uniquely represented as a univariate polynomial over $\F_{2^n}$ (using the natural identification of the finite field with the vector space) of the form
	$
	F(x)=\sum_{i=0}^{2^n-1} a_i x^i,\ a_i\in\F_{2^n}.
	$
	The algebraic degree  of $F$ is then the largest weight (in the  binary expansion) of the exponents $i$ with $a_i\neq 0$. For a multivariate Boolean function, the algebraic degree is the largest number of variables in the monomials of the polynomial representation of that function.
	For an $(n,m)$-function $F$ and $a\in \F_{2^n}, b\in \F_{2^m}$, we define the Walsh transform $W_F(a,b)$ to be the Walsh-Hadamard transform of its component function ${\rm Tr}_m(bF(x))$ at $a$, that is,
	\[
	W_F(a,b)=\sum_{x\in\F_{2^n}} (-1)^{\Trm(bF(x))-\Trn(ax)},
	\]
	with the same caveat that the traces are to be replaced by the regular scalar product when working over the corresponding vector spaces.
	
	Given a vector $x=(x_1,\ldots,x_n) \in \F_2^n$, its Hamming weight is $wt(x) := \sum_{i=1}^{n}x_i$, and for two vectors $x,y$, then the
	{\em Hamming distance} is $d(x,y)=wt(x+y)$. For functions $f,g$, we let the Hamming weights/distance be the Hamming weights/distance of their truth (output) table. 
	We define the {\em nonlinearity} for a Boolean function to be $N_f=\min\{d(f,\ell) : \ell\ \text{affine function}\}$, which is known to be equal to $\displaystyle 2^{n-1}-\frac{1}{2}\max_x|W_f(x)|$, and upper bounded
	by $N_f\le 2^{n-1}-2^{\frac{n}{2}-1}$, attained for $n$ even (by {\em bent} functions). We further define the nonlinearity of a vectorial Boolean $(n,m)$-function $F$ to be the smallest nonlinearity among its component functions $b\cdot F$, for $b\in\F_2^m$.
	The largest nonlinearity of a Boolean function in odd dimension $n>7$   is larger than $ 2^{n-1}-2^{\frac{n-1}{2}}$ (this is known as the bent concatenation bound, which is known to be attained for $n\leq 7$).
	
	Given an $(n,n)$-function $F$, and $a,b\in\F_{2^n}$, we let $\Delta_F(a,b)=\cardinality{\{x\in\F_{2^n} : F(x+a)-F(x)=b\}}$. Then
	$\Delta_F=\max\{\Delta_F(a,b)\,:\, a,b\in \F_{2^n}, a\neq 0 \}$ is the {\em differential uniformity} of $F$. If $\Delta_F= \delta$, then we say that $F$ is differentially $\delta$-uniform. 
	Since, for fixed $a$, any solution $x_0$ comes along with another, $a+x_0$, then we can have either $0,2$ and above solutions.
	If $\delta=2$, then $F$ is called an {\em almost perfect nonlinear} ({\em APN}) function.

	\section{Cellular automata and liftings}
	\label{sec3}
	
	Let $A=\{0,1\}^\Z=\{(x_i)_{i\in\Z} : i\in\Z\}$ be the set of all bi-infinite strings of $0$'s and $1$'s, sometimes called the full shift-space. Define the shift-operator $S\colon A\to A$ by $S(x)_i=x_{i-1}$. For a function $F\colon A\to A$, let $f_i\colon A\to \{0,1\}$ denote its $i$'th coordinate function, that is, $F(x)_i=f_i(x)$. Let $j,\ell\in\Z$. Then $F$ satisfies the identity $F\circ S^j=S^\ell\circ F$ if and only if
	\[
	f_i\circ S^j=f_{i-\ell} \text{ for all } i\in\Z. 
	\]
	It follows that $F$ is completely determined by $f_1,\dotsc,f_\ell$. A function $F\colon A\to A$ is said to be {\em shift-invariant} if $F\circ S=S\circ F$, and is then determined by a single function $f\colon A\to\{0,1\}$, sometimes called a ``local rule''.
	
	In particular, let $k\geq 1$ and let $f\colon \{0,1\}^k\to\{0,1\}$ be any function. Then for any $w\in\Z$, $f$ induces a map $F\colon A\to A$ defined by
	\[
	f_i(x) = f\circ S^{1-i-w}(x)=f(x_{i+w},\dotsc,x_{i+w+k-1}) \text{ for all } i\in\Z.
	\]
	In this case $F$ is called an \emph{infinite cellular automaton}.
	If we give $A$ the product topology, then $F\colon A\to A$ is an infinite cellular automaton if and only if it is shift-invariant and continuous (see \cite[Theorem~3.4]{Hedlund}).
	An infinite cellular automaton $F\colon A\to A$ is \emph{reversible} if it is bijective and $F^{-1}$ is an infinite cellular automaton.
	The ``offset'' $w\in\Z$ does not have any structural impact, so in what follows we just consider $w=0$.
	
	\medskip
	
	For each $n\geq 1$ define $A_n\subseteq A$ to be the subset of $n$-periodic strings. That is,
	\[
	A_n = \{ x\in A : S^n(x)=x \} = \{ x\in A : x_i=x_{i+n}, \text{ for all } i\in\Z \}.
	\]
	Then $F$ restricts to a function $A_n\to A_n$ if and only if $f_{i+n}(x)=f_i(x)$ for all $x\in A_n$ and all $i\in\Z$.
	Suppose that $F\circ S=S^\ell \circ F$ for some $\ell\geq 1$ and that $F$ restricts to $A_n$ for some $n\geq 1$ with $\gcd(\ell,n)=1$. Then $F$ is completely determined by a single function $f\colon A_n\to\{0,1\}$.
	Indeed, let $m$ be the inverse of $\ell$ modulo~$n$ and it follows that (see Lemma~\ref{gcd-k-n-1} below)
	\[
	f_i(x) = f\circ S^{(1-i)m}(x), \text{ for all } i\in\Z.
	\]
	
	\medskip
	
	A function $F\colon A_n\to A_n$ is called a (finite) cellular automaton if it is induced from a Boolean function $f\colon \{0,1\}^n\to\{0,1\}$, i.e., with $\operatorname{diam}(f)=n$, by
	\[
	f_i(x) = f\circ S^{1-i}(x)=f(x_i,\dotsc,x_{i+n-1}), \text{ for all } i\in\Z,
	\]
	and in this case, $F$ is a cellular automaton if and only if it is shift-invariant.
	Moreover, $F$ is a reversible cellular automaton if it is bijective on $A_n$.
	
	\medskip
	
	Let $f$ be a Boolean function defining $F$ by setting $f_i=f \circ S^{1-i}$. Then (somewhat counter intuitively) $f$ is called locally invertible if $F$ is a reversible infinite cellular automaton on $A$, and globally invertible if there is some $n\geq\operatorname{diam}(f)$ such that $F$ is a reversible cellular automaton on $A_n$. It can be shown that $f$ is locally invertible on $A$ if and only if it is globally invertible on $A_n$ for every $n\geq\operatorname{diam}(f)$ (see \cite[Theorem~4]{Kari}).

	\subsection{Rotation-symmetric S-boxes and liftings}
	
	If a function $F\colon A\to A$ restricts to a function $A_n\to A_n$, it is natural to identify $A_n$ with $\F_2^n$ via the map $x\mapsto (x_1,\dotsc,x_n)$, and also identify $F$ with a function $\F_2^n\to\F_2^n$.
	
	Henceforth, we let $S$ denote the right shift on $\F_2^n$, that is,
	\[
	S(x_1,\dotsc,x_{n})=(x_{n},x_1,\dotsc,x_{n-1}).
	\]
	A function $F\colon \F_2^n \to \F_2^n$ is called a rotation-symmetric S-box if $S\circ F=F\circ S$, i.e., if $F$ is shift-invariant.
	For every rotation-symmetric S-box $F$, as explained in Lemma~\ref{gcd-k-n-1}, there is a unique Boolean function $f\colon \F_2^n \to \F_2$ such that (we let $x=(x_1,\ldots,x_n)$)
	\begin{align*}
		F(x)&=\left(f(x),f\circ S^{-1}(x),f\circ S^{-2}(x),\dotsc,f\circ S^{-n+1}(x) \right)\\
		&=\left(f(x_1,x_2\ldots,x_n),f(x_2,\ldots,x_{n},x_1),\ldots,f(x_n,x_1,\ldots,x_{n-1})\right).
	\end{align*}
	Let $f$ be a Boolean function on~$k$ variables.
	Then $f$ induces rotation-symmetric S-boxes $F\colon \F_2^n \to \F_2$ for all $n\geq k$. Our goal is to study situations where $F$ is a permutation.
	
	\begin{definition}
		We say that a rotation-symmetric (shift-invariant) $F$ is a $(k,n)$-lifting for $k\leq n$ if $F\colon \F_2^n\to\F_2^n$ is a permutation (bijection) that is induced from a Boolean function $f\colon \F_2^k\to\F_2$ by
		\[
		F(x_1,x_2,\dotsc,x_{n})=\big(f(x_1,x_2,\dotsc,x_{k}),f(x_2,x_3,\dotsc,x_{k+1}), \dotsc,f(x_k,x_1,\dotsc,x_{k-1})\big).
		\]
		Similarly, we say that a Boolean function $f\colon \F_2^k\to\F_2$ is a $(k,n)$-lifting for $k\leq n$ if $F\colon \F_2^n\to\F_2^n$ induced from $f$ is a permutation.
	\end{definition}
	
	This setup is a special case of the above approach, where we restrict to Boolean functions $f\colon \F_2^n \to \F_2$ that only depend on the first $k$ arguments.

	\begin{theorem}\label{zilin-observation}
		Let $f\colon\F_2^k\to\F_2$ and $g\colon\F_2^l\to\F_2$ be two Boolean functions. Assume that there exists $n$ with $k+l\leq n+1$, such that the induced maps $F$ and $G$ are each others inverses on $\F_2^n$. Then $f$ is a $(k,n)$-lifting for all $n\geq k$ and $g$ is an $(l,n)$-lifting for all $n\geq l$, that is, they are both locally invertible.
	\end{theorem}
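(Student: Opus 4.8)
The plan is to realize the composition $G\circ F$ as a (finite) cellular automaton of small diameter, use the hypothesis to pin down its local rule, and then observe that the resulting identity is one of \emph{Boolean functions} and therefore no longer refers to the ambient size $n$.

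First I would introduce the Boolean function $h\colon\F_2^{k+l-1}\to\F_2$ defined by
\[
h(z_1,\dotsc,z_{k+l-1})=g\bigl(f(z_1,\dotsc,z_k),\,f(z_2,\dotsc,z_{k+1}),\,\dotsc,\,f(z_l,\dotsc,z_{l+k-1})\bigr).
\]
Unwinding the two windowed definitions shows that, for \emph{every} $m\geq 1$, the map $G\circ F$ on $\F_2^m$ is the cellular automaton induced by $h$: that is, $(G\circ F)(x)_i=h(x_i,x_{i+1},\dotsc,x_{i+k+l-2})$, with indices read modulo $m$. So $G\circ F$ is a cellular automaton whose local rule has diameter at most $k+l-1$.

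Next I would exploit the hypothesis for the given $n$ with $k+l-1\leq n$. Since $G\circ F=\operatorname{id}$ on $\F_2^n$, and since $k+l-1\leq n$ makes the indices $1,2,\dotsc,k+l-1$ pairwise distinct modulo $n$, I can realize any prescribed window: given $(y_1,\dotsc,y_{k+l-1})\in\F_2^{k+l-1}$, choose $x\in\F_2^n$ whose first $k+l-1$ coordinates are $y_1,\dotsc,y_{k+l-1}$ (the rest arbitrary). Then $h(y_1,\dotsc,y_{k+l-1})=h(x_1,\dotsc,x_{k+l-1})=(G\circ F)(x)_1=x_1=y_1$. Hence $h(z_1,\dotsc,z_{k+l-1})=z_1$ as Boolean functions on $\F_2^{k+l-1}$, with no remaining dependence on $n$.

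Finally I would feed this identity back in. For every $m\geq 1$ and every $i$ we get $(G\circ F)(x)_i=h(x_i,\dotsc,x_{i+k+l-2})=x_i$ --- the identity $h(z)=z_1$ holds for \emph{all} inputs, in particular the "degenerate" windows with repeated entries that arise from wrap-around when $m<k+l-1$ --- so $G\circ F=\operatorname{id}_{\F_2^m}$. Since $\F_2^m$ is finite, $F$ is injective, hence bijective, and $G=F^{-1}$ is bijective as well. Therefore $f$ is a $(k,m)$-lifting for all $m\geq k$ and $g$ is an $(l,m)$-lifting for all $m\geq l$; by the characterization recalled above (\cite{Kari}), both are locally invertible. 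I do not expect a genuine obstacle here: the crux is the single observation that $G\circ F$ is a cellular automaton of diameter $\leq n$ and hence is determined as a Boolean rule by its action on $\F_2^n$. The only point requiring slight care is the index bookkeeping for small $m$, handled by noting that $h(z)=z_1$ was established on all of $\F_2^{k+l-1}$, so it survives reduction of indices modulo $m$.
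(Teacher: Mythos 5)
Your proof is correct and its core coincides with the paper's: both arguments reduce $G\circ F=\operatorname{id}$ to the single coordinate identity $(G\circ F)_1(x)=x_1$ (via shift-invariance), observe that this identity involves only $k+l-1\leq n$ consecutive variables so that no wrap-around occurs, and conclude that it is a genuine Boolean identity, independent of the ambient length. The one place you genuinely diverge is the passage to lengths $m$ with $k\leq m<k+l-1$: the paper handles these by citing the fact (from Daemen's thesis) that a $(k,N)$-lifting restricts to a $(k,m)$-lifting whenever $m$ divides $N$, implicitly choosing $N\geq k+l-1$ a multiple of $m$; you instead note that the identity $h(z)=z_1$ was established on all of $\F_2^{k+l-1}$, hence applies in particular to the degenerate windows with repeated entries produced by wrap-around, so $G\circ F=\operatorname{id}_{\F_2^m}$ holds directly for every $m$. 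Your variant is self-contained and avoids the external citation, at the cost of verifying the (routine, and correctly handled) index bookkeeping that $(G\circ F)(x)_i=h(x_i,\dotsc,x_{i+k+l-2})$ with indices read modulo $m$ for every $m\geq 1$.
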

	
	\begin{proof}
		Assume that $G$ is the inverse of $F$. Note that $G \circ F = I$ is actually equivalent to $(G\circ F)_1(x) = x_1$ because of the shift-invariance of $F$ and $G$. Indeed, if $(G\circ F)_1(x) = x_1$, then $(G\circ F)_2(x)=(G\circ F)_1(S^{-1}(x))=x_2$.
		Writing out $(G\circ F)_1 (x) = x_1$ in terms of $g$ and $f$, we get  
		\[
		g(f(x_1,\ldots,x_k),f(x_2,\ldots,x_{k+1}),\ldots)=x_1.
		\]
		There is a total of $k+l-1$ consecutive $x_i$’s involved in this equation. However, since $k+l-1\leq n$, there is no ``wrap around'' in the above condition, which means the same equation would hold when $n$ is increased.
		
		To conclude, if $f$ is a $(k,n)$-lifting and $k\leq m\leq n$ such that $m$ divides $n$, then $f$ is also a $(k,m)$-lifting, and similar for $g$ (see \cite[Proposition~6.1]{JDA-thesis} and also Remark~\ref{Daemen-xi} below).
	\end{proof}
	
	\begin{corollary}
		Let $f$ be a Boolean function on $k$ variables.
		If there exists $n\geq 2k-1$ such that the induced map $F$ on $\F_2^n$ is an idempotent, i.e., $F^2=I$, then $f$ is locally invertible.
	\end{corollary}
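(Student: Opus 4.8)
The plan is to recognize this corollary as the special case of Theorem~\ref{zilin-observation} in which the two Boolean functions coincide. I would set $g=f$ and $l=k$. The hypothesis $F^2=I$ says precisely that $F$ is its own inverse on $\F_2^n$, so the induced maps $F$ and $G$ are each other's inverses once we take $G=F$. Moreover, the arithmetic condition $n\geq 2k-1$ rearranges to $k+l=2k\leq n+1$, which is exactly the inequality $k+l\leq n+1$ required in Theorem~\ref{zilin-observation}.

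First I would observe that $F^2=I$ forces $F$ to be a bijection of $\F_2^n$ (an involution is in particular a permutation), so all hypotheses of Theorem~\ref{zilin-observation} are satisfied verbatim with the choices above. Applying that theorem then yields that $f$ is a $(k,n)$-lifting for every $n\geq k$, which is by definition the statement that $f$ is locally invertible; the parallel conclusion about $g$ adds nothing since $g=f$.

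I do not expect a real obstacle here, as the corollary is a direct instantiation of the theorem. The only points deserving a sentence of care are matching the self-inverse hypothesis $F^2=I$ to the ``each others inverses'' phrasing of Theorem~\ref{zilin-observation} (which holds with $G=F$), and noting that specializing $l=k$ turns the bound $k+l\leq n+1$ into exactly $n\geq 2k-1$.
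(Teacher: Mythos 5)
Your proposal is correct and matches the paper's intent exactly: the paper states this corollary immediately after Theorem~\ref{zilin-observation} with no separate proof, precisely because it is the specialization $g=f$, $l=k$, $G=F$, where $F^2=I$ gives the mutual-inverse hypothesis and $k+l=2k\leq n+1$ is the stated bound $n\geq 2k-1$.
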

	
	\begin{remark}
		Let $f(x)=x_1+(x_2+1)x_3$ and $n$ be a fixed odd number. Then the function $\chi_n$ induced by $f$ has an inverse $\chi_n^{-1}$ described by another polynomial $g$ depending on $l$ variables. Since we know that $f$ is not locally invertible, we must have that $3+l>n+1$. In \cite[Theorem~1]{chi-inverse}, the inverse is described and the corresponding~$g$ depends on $2n-1$ variables, which is greater than~$n-2$.
	\end{remark}

	For $1\leq k<n\leq 4$, there are no nonlinear $(k,n)$-liftings,
	and in general it seems hard to compute the number of $(k,n)$-liftings when $k<n$.
	However, partial nontrivial results can be obtained.
	For example, there are four nonlinear $(3,5)$-liftings without a constant term that are all essentially equivalent (see Section~\ref{liftings}), and one of them is
	\[
	x_1+(x_2+1)x_3,
	\]
	which is used in the current hash function Keccak, see e.g., \cite[mid-p.~8]{mariot2021evolutionary}
	and 
	\cite[p.~3]{legendre-symbol}.
	In fact, we suspect that there are no nonlinear $(3,n)$-liftings when $n>3$ is even, and for every odd $n>3$, all the $(3,n)$-liftings are coming from the $(3,5)$-liftings described above.
	
	\bigskip
	
	All new classes of $(k,n)$-liftings should be of interest, and we will display our findings below. The ultimate task is to find rotation-symmetric permutations with sufficiently ``good'' cryptographic properties, but regardless, any new finding will shed further light on the problem. 
	Moreover, we would like to determine when such functions give rise to affine equivalent S-boxes. It is also of interest to investigate  the cryptographic properties of the Boolean functions that induce such permutations, the nonlinearity, their differential uniformity, etc.

	\section{Finding all bijections}
	\label{sec4}
	
	
	
	For an element $x=(x_1,x_2,\dotsc,x_{n})\in\F_2^n$, the cycle $c(x)$ of $x$ is the set
	\[
	\begin{split}
		c(x) &= \{ S^i(x) : i=0,\dotsc,n-1 \} \\ &=\{ (x_1,x_2,\dotsc,x_{n}),(x_2,\dotsc,x_{n},x_1),\dotsc,(x_{n},x_1,\dotsc,x_{n-1}) \}.
	\end{split}
	\]
	The size of this set is called the length of the cycle.
	In particular, there are exactly two elements with a cycle of length $1$, namely,
	\[
	0=(0,0,\dotsc,0)\text{ and }1=(1,1,\dotsc,1),
	\]
	and we call these the trivial cycles.
	
	The cycles are the smallest sets $X$ that are shift-invariant, i.e., that satisfy $S(X)=X$.
	
	\medskip
	
	If $F$ is shift-invariant, then
	\[
	S(F(c(x)))=F(S(c(x)))=F(c(x)),
	\]
	for every cycle $c(x)$, that is, the image of every cycle is contained in a cycle.
	
	Henceforth, we assume that $F$ is shift-invariant and bijective.
	Since the set $X=F(c(x))$ must have the same size as $c(x)$,
	and since it satisfies $S(X)=X$, it must be a cycle of the same length as $c(x)$.
	Let
	\[
	c_\ell=\{ X\subseteq \F_2^n : X=c(x)\text{ for some }x\text{ and }\lvert X\rvert=\ell \}.
	\]
	A function $F$ that maps cycles to cycles of the same length induces a function $c_\ell\to c_\ell$ given by $X\mapsto F(X)$. 
	We conclude that:
	\begin{proposition}
		A shift-invariant function $F$ is bijective if and only if it maps cycles to cycles and induces bijections on $c_\ell$ for every $\ell$.
	\end{proposition}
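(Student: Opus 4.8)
The plan is to use that $\F_2^n$ is partitioned into cycles and that $F$ carries cycles to cycles, and then to play off bijectivity of $F$ against bijectivity of the induced maps on the finite sets $c_\ell$, invoking throughout that a self-map of a finite set is bijective as soon as it is injective (equivalently, surjective). Both implications then reduce to a one-sided check.

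For the forward direction, I would start from the discussion preceding the statement: if $F$ is shift-invariant and bijective, it sends each cycle $c(x)$ to a cycle of the same length, so it induces a well-defined self-map of $c_\ell$ for every $\ell$. It remains to see this self-map is injective, which follows from injectivity of $F$ on $\F_2^n$: if $X,Y\in c_\ell$ and $F(X)=F(Y)$ as subsets, then any $x\in X$ has $F(x)\in F(X)=F(Y)$, so $F(x)=F(y)$ with $y\in Y$ and hence $x=y\in Y$; thus $X\subseteq Y$, and symmetrically $X=Y$. Finiteness of $c_\ell$ upgrades injectivity to bijectivity.

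For the converse, suppose $F$ maps cycles to cycles (of the same length, so that the induced self-maps of the $c_\ell$ are defined) and that each of these induced maps is a bijection. I would prove $F$ is surjective: given $y\in\F_2^n$, set $\ell=\lvert c(y)\rvert$ so that $c(y)\in c_\ell$; surjectivity of the induced map on $c_\ell$ produces $X\in c_\ell$ with $F(X)=c(y)$, and then $y\in F(X)$ shows $y=F(x)$ for some $x\in X$. Hence $F$ is surjective, and once more finiteness of $\F_2^n$ gives bijectivity. The argument is essentially bookkeeping; the only delicate point is the well-definedness of the induced maps, which is exactly ``$F$ maps cycles to cycles of the same length'' together with the identity $S(F(X))=F(S(X))=F(X)$ for a cycle $X$, and I expect no real obstacle beyond keeping these compatibilities straight.
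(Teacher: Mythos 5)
Your proof is correct and follows essentially the same route as the paper, which presents the proposition as the conclusion of the preceding discussion: shift-invariance forces $S(F(c(x)))=F(c(x))$, bijectivity forces same-length preservation, and the induced maps on the finite sets $c_\ell$ do the rest. You are in fact more explicit than the paper about the converse direction (surjectivity of $F$ from surjectivity of the induced maps on each $c_\ell$, using that the cycles partition $\F_2^n$), which the paper leaves implicit; your added care about well-definedness of the induced self-maps is consistent with the paper's earlier remark that they are defined precisely for functions mapping cycles to cycles of the same length.
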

	For example, $c_1$ consists of $0=(0,0,\dotsc,0)$ and $1=(1,1,\dotsc,1)$,
	so a shift-invariant bijection $F$ must either satisfy $F(0)=0$ and $F(1)=1$ or vice versa, i.e., $F(0)=1$ and $F(1)=0$.
	
	For $n=3$ the nontrivial cycles can be represented by elements $(1,0,0)$ and $(1,1,0)$, each of length $3$.
	There are $2^3=8$ elements in $\F_2^3$, so $F$ is determined by its values at these elements, together with the trivial cycles.
	There are $2$ possibilities for $F(0)$, then $1$ possibility for $F(1)$, then $6$ possibilities for $F(1,0,0)$, determining also $F(0,1,0)$ and $F(0,0,1)$, and then finally $3$ possibilities for $F(1,1,0)$.
	Thus, in total there are $2\cdot 1\cdot 6\cdot 3=36$ bijections.
	
	For $n=4$ the nontrivial cycles are $(1,0,1,0)$ of length $2$ and $(1,0,0,0)$, $(1,1,0,0)$, and $(1,1,1,0)$, each of length $4$.
	Counting bijections as above, starting with the cycles of smallest length, we get that there are $2\cdot 1\cdot 2\cdot 12\cdot 8\cdot 4=1536$ bijections, etc., and we list these computations in the table below.
	\begin{table}[H]
		\centering
		\begin{tabular}{|l|l|r|} \hline
			$n$ &\text{decomposed bijection count} & \text{number of bijections} \\ \hline
			1 & $2\cdot 1$ & 2 \\
			2 & $2\cdot 1\cdot 2$ & 4 \\
			3 & $2\cdot 1\cdot 6\cdot 3$ & 36\\
			4 & $2\cdot 1\cdot 2\cdot 12\cdot 8\cdot 4$ & 1536 \\
			5 & $2\cdot 1\cdot 30 \cdot 25\cdot 20\cdot 15\cdot 10\cdot 5$ & 22\, 500\, 000 \\
			6 & $2\cdot 1\cdot 2\cdot 6\cdot 3 \cdot 54 \cdot 48\cdots 12\cdot 6$ & 263\, 303\, 591\, 362\, 560 \\
			7 & $2\cdot 1\cdot 126 \cdot 119 \cdots 14\cdot 7$ & \text{approx.\ } $2\cdot 10^{31}$ \\ \hline
		\end{tabular}
		\caption{Number of bijections for $n\leq 7$}
		\label{table:1}
	\end{table}
	Obviously, one can easily reduce these numbers by splitting the count into equivalence classes.
	For example, just by requiring $F(0)=0$ and $F(1)=1$, we can divide all counts by $2$.

	The above does not seem to be well known in the cryptography community (see~\cite[Table~3]{MPJL19}), although a nice formula was given in \cite[Proposition~14]{Kavut12}. However, this \emph{is} probably known to experts in group theory, using the description by centralizers indicated below. For the convenience of the reader, we state the general result for sets of arbitrary size.
	
	
	Let $X$ be a set of $n$ elements and let $G$ be a subgroup of the symmetric group $S_n$.
	Let $H$ denote the group of permutations on $X$ that are invariant under $G$,
	that is, $H=C_{S_n}(G)$, the centralizer of $G$ in $S_n$.
	To compute the number of elements in $H$, recall that the orbits $Gx=\{ gx : g\in G \}$ form a partition of $X$.
	Let $d$ be the number of distinct sizes $s_1,\dotsc,s_d$ of the sets $Gx$, $x\in X$,
	and let $t_i$ be the number of orbits of size $s_i$ for $1\leq i\leq d$, so that
	\[
	\sum_{i=1}^d t_is_i = n.
	\]
	Then the size of $H$ is
	\[
	\prod_{i=1}^d s_i^{t_i}\, (t_i!).
	\]
	One situation that is of particular interest is when $p$ is a prime number and $X=\F_{p^k}^m$, so $n=p^{km}$,
	and $G=\{S^j : 0\leq j < m\}$, where $S$ denotes the permutation on $X$ given by $S(x_1,x_2\dotsc,x_m)=(x_m,x_1,\dotsc,x_{m-1})$ for $x_i\in\F_{p^k}$.
	In this case, one may also define the subgroups $L$ and $A$ of $S_n$ consisting of all linear and affine bijections $X\to X$, respectively,
	when $X$ is viewed as an $m$-dimensional vector space over $\F_{p^k}$.
	The size of $L$ is given by
	\[
	\prod_{i=1}^m \left(p^{km}-p^{k(i-1)}\right),
	\]
	where the reasoning is that the first row can be any nonzero element, and for $i\geq 2$, the $i+1$'th row must be outside the span of the first $i$ rows.
	The size of $A$ is then $p^{km}$ times the size of $L$.
	We remark that the group of invertible circulant matrices coincides with $C_L(G)=C_{S_n}(G)\cap L$, and its size is given below in Theorem~\ref{thm:noInv_circ} in the case $p^k=2$.

	\section{Liftings and some open questions}
	\label{liftings}
	
	Computing the number of $(k,n)$-liftings for $k<n$ seems much more difficult than the computation for $k=n$.
	In general, if $f$ is a $(k,n)$-lifting there may be other $m$'s such that $f$ is also a $(k,m)$-lifting, so we define the set
	\[
	\operatorname{inv}(f) = \{ n\geq k : f \text{ induces a bijection } \F_2^n \to \F_2^n \}.
	\]
	Then $f$ is locally invertible if $\operatorname{inv}(f)=\{ n : n\geq d\}$ and globally invertible if $\operatorname{inv}(f)\neq\varnothing$. In the latter case it is natural to distinguish between the functions $f$ for which the set $\operatorname{inv}(f)$ is finite or infinite.
	
	Consider the two commuting maps $\F_2^n\to\F_2^n$ given by complementing and reflecting, i.e.,
	\[
	(x_1,x_2,\dotsc,x_n)\stackrel{\sigma_1}{\mapsto} (\overline{x_1},\overline{x_2},\dotsc,\overline{x_n})
	\quad\text{ and }\quad
	(x_1,x_2,\dotsc,x_n)\stackrel{\sigma_2}{\mapsto} (x_n,x_{n-1},\dotsc,x_1).
	\]
	We say that $f$ and $g$ are \emph{essentially equivalent} (surely, a subclass of the well known extended affine equivalence notion) if $g$ can be formed by composing $f$ with any combination of these two maps, $\sigma_1,\sigma_2$, modulo adding a constant term. Clearly, the invertibility properties are preserved under essential equivalence.
	
	The functions $f(x_1,\dotsc,x_{k})=x_j$ for $1\leq j\leq k$ generate invertible $(k,n)$-liftings, for all $n\geq k$, and for $k=1,2,3$ there are no other locally invertible functions. For $k=4$ the following function, originally described by Patt~\cite{Patt}, is the only locally invertible up to essential equivalence:
	\[
	x_2+x_1\overline{x_3}x_4.
	\]
	Below we present what we believe are all other functions $f\colon\F_2^4\to\F_2$ for which $\operatorname{inv}(f)$ is infinite, found by experiments, up to essential equivalence. The following induce permutations for all odd $n$, but not for even $n$, i.e., $\operatorname{inv}(f)=\{5,7,9,11,\dotsc\}$:
	\[
	x_2+x_1x_3\overline{x_4},\quad
	x_1+x_2\overline{x_3},\quad
	x_2+x_3\overline{x_4},
	\]
	the latter two coming from a $3$-variable function.
	The following induce permutations for all $n$ not divisible by $3$, i.e., $\operatorname{inv}(f)=\{4,5,7,8,10,11,\dotsc\}$:
	\[
	x_1+x_2x_3\overline{x_4},\quad
	x_1+\overline{x_2}x_3x_4.
	\]
	For $n\geq 5$ some examples are given in \cite[Appendix~A.3]{JDA-thesis}.
	
	The table below shows half of the number of $(k,n)$-liftings for $k=4$ and $5$ on the left and right, respectively (half because we impose the condition that zero maps to zero). When $n>k=5$ finding all liftings would require more computer power.
	\begin{table}[!htb]
		\centering
		\begin{tabular}{|c|c|c|c|} \hline
			$n$ & \#\,\text{liftings} & $\deg\leq 1$ & $\deg\leq 2$ \\ \hline
			4 & 768 & 8 & 32 \\
			5 & 236 & 8 & 40 \\
			6 & 22 & 6 & 14 \\
			7 & 30 & 6 & 14 \\
			8 & 20 & 8 & 8 \\
			9 & 22 & 6 & 14 \\
			10 & 20 & 8 & 8 \\
			11 & 32 & 8 & 16 \\
			12 & 10 & 6 & 6 \\
			13 & 32 & 8 & 16 \\
			14 & 18 & 6 & 6 \\
			15 & 22 & 6 & 14 \\ \hline
		\end{tabular}
		\hspace{20pt}
		\begin{tabular}{|c|c|c|c|} \hline
			$n$ & \#\,\text{liftings} & $\deg\leq 1$ & $\deg\leq 2$ \\ \hline
			5 & 11250000 & 15 & 1890 \\
			6 &  & 12 & 336 \\
			7 &  & 12 & 89 \\
			8 &  & 16 & 16 \\
			9 &  & 12 & 33 \\
			10 &  & 15 & 19 \\
			11 &  & 16 & 40 \\
			12 &  & 12 & 12 \\
			13 &  & 16 & 40 \\
			14 &  & 12 & 16 \\
			15 &  & 9 & 25 \\ \hline
		\end{tabular}
		
		\caption{Half of the number of $(k,n)$-liftings for $k=4$, respectively $5$}
		\label{table-4-5-lift}
	\end{table}
	
	An exhaustive search reveals that there are 94 functions of $\deg\leq 2$ that are $(5,n)$-liftings for some $7\leq n\leq 15$. In other words, there are only 5 functions amongst these that are not $(5,7)$-liftings. Some of these are $(5,9)$-liftings, but not $(5,m)$-liftings for any other $5\leq m\leq 15$.
	
	\begin{question}\label{qst-liftings}
		There are several counting problems related to $(k,n)$-liftings.
		\begin{itemize}
			\item[(i)] How many locally invertible functions in $k$ variables are there?
			\item[(ii)] How many globally invertible functions in $k$ variables with infinite $\operatorname{inv}(f)$ are there?
			\item[(iii)] Can one find upper bounds on the number of liftings of various types? We provide partial results below for $(3,n)$ and $(4,n)$-liftings.
			\item[(iv)] Does there exist a bound, say $\tau(k)$, depending only on $k$, such that if $f$ is a $(k,n)$-lifting for some $n\geq k$, then there exists $m<\tau(k)$ such that $f$ is a $(k,m)$-lifting?
		\end{itemize}
	\end{question}
	
	\begin{remark}\label{Daemen-xi}
		If $f$ is a $(k,n)$-lifting and $k\leq m\leq n$ such that $m$ divides $n$, then $f$ is also a $(k,m)$-lifting. In \cite[after Proposition~6.1]{JDA-thesis} it is explained that this leads to a way of describing $\operatorname{inv}(f)$ by a set of integers denoted by $\xi$. Experiments indicate that $\xi$ may be finite for many functions $f$, but the same experiments also indicate that this is not always the case. We note that for every example in \cite[after Proposition~6.1]{JDA-thesis}, the set $\xi$ is given as a small finite set. In general, it seems difficult to find a proof that determines $\operatorname{inv}(f)$ based on partial knowledge of $\operatorname{inv}(f)$. In fact, even proving whether or not $\operatorname{inv}(f)$ is infinite seems hard. Answering some of the problems given in Question~\ref{qst-liftings} would be helpful in determining finiteness of $\operatorname{inv}(f)$ or $\xi$.
		
	\end{remark}

	\section{General results and bounds on $(k,n)$-liftings}
	\label{sec6}
	
	We observed computationally and then showed (see also~\cite[Theorem 2.5]{CC07} that if a Boolean function $f$ is a $(k,n)$-lifting, then the algebraic degree of $f$ is smaller than $k$. 
	\begin{theorem}
		Let $f$ be a $(k,n)$-lifting, $n\geq k\geq 3$. Then $\deg(f)<k$.
	\end{theorem}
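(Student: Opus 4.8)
The plan is to extract the degree bound from two elementary facts: that a lifting, being a bijection, has balanced coordinate functions, and that the coefficient of the top monomial $x_1x_2\cdots x_k$ in the algebraic normal form of a Boolean function on $k$ variables is the parity of its weight.

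First I would observe that if $F\colon\F_2^n\to\F_2^n$ is any bijection, then its first coordinate function $F_1$ (so that $F_1(x)$ is the first entry of $F(x)$) is balanced: the set $F_1^{-1}(1)$ is precisely the $F$-preimage of the affine hyperplane $\{y\in\F_2^n : y_1=1\}$, which has $2^{n-1}$ elements, so $\lvert F_1^{-1}(1)\rvert=2^{n-1}$. (Equivalently, every component $b\cdot F$ with $b\neq 0$ is balanced; we use only the first coordinate.) Applying this to the lifting induced by $f$ and using that $F_1(x)=f(x_1,\dots,x_k)$ depends only on the first $k$ coordinates, one obtains
\[
2^{n-1}=\bigl\lvert\{x\in\F_2^n : f(x_1,\dots,x_k)=1\}\bigr\rvert=2^{n-k}\cdot\bigl\lvert\{z\in\F_2^k : f(z)=1\}\bigr\rvert,
\]
hence $wt(f)=\lvert\{z\in\F_2^k : f(z)=1\}\rvert=2^{k-1}$. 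This holds uniformly for all $n\geq k$; for $n=k$ it is simply the statement that $F_1=f$ is itself balanced on $\F_2^k$.

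Second, I would invoke the standard M\"obius inversion identity: for a Boolean function $f$ on $k$ variables, the coefficient of $x_1x_2\cdots x_k$ in its algebraic normal form equals $\bigoplus_{z\in\F_2^k}f(z)$, that is, $wt(f)\bmod 2$. Since $k\geq 3$, the value $2^{k-1}$ found above is even, so this coefficient vanishes, and because $x_1x_2\cdots x_k$ is the only degree-$k$ monomial in $k$ variables, we conclude $\deg(f)\leq k-1<k$. There is really no serious obstacle here: the argument is short and routine, the only points deserving a little care being that the counting in the first step should be phrased so as to cover all $n\geq k$ at once, and that the parity-versus-top-coefficient fact, while classical, is exactly the content packaged for vectorial functions in \cite[Theorem~2.5]{CC07}, which may alternatively be cited in place of this paragraph.
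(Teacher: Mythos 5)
Your proof is correct and follows essentially the same route as the paper: both arguments reduce the claim to the balancedness of the first coordinate of the bijection $F$, which forces $wt(f)=2^{k-1}$, and then to the fact that the coefficient of $x_1x_2\cdots x_k$ in the ANF equals $wt(f)\bmod 2$. The only difference is presentational: you cite the classical M\"obius inversion identity for the top coefficient, whereas the paper deliberately gives a self-contained inductive proof of the equivalent statement that every function of degree less than $k$ has even weight.
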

	\begin{proof}
		Let the vectorial function $F\colon \F_2^n\to\F_2^n$ be the lift of $f$. Since $F$ is a permutation, then (at least) its first component must be balanced. We will show that if $f$ contains the term $x_1x_2\cdots x_k$, then $f$ cannot be balanced (this last claim can be shown from what we know about the weight distribution of Reed-Muller codes, but we provide below an  argument avoiding that). 
		
		Let $f(x)=f_k(x)+x_1 x_2\cdots x_k$, where $f_k$ has degree less than~$k$. It
		suffices to show that
		\begin{equation}
			\label{cong}
			\sum_{x \in \F_2^k} f_k(x) \equiv 0 \pmod 2,
		\end{equation}
		for each $k=2,3, \ldots,$ since then the truth table of $x_1x_2 \cdots x_k + f_k(x)$ will have an odd number of $1$'s.
		
		We shall prove~\eqref{cong} by induction. The
		initial case $k=2$ is obviously true, so we assume \eqref{cong}
		holds for $k$ and prove it for $k+1$. Given $f(x)=f_{k+1}(x)+x_1x_2\cdots x_{k+1}$, $\deg(f_{k+1})<k+1$, we distinguish two cases in the proof.
		
		\noindent
		{\em Case $1.$}  
		If there exists $x_i$ which
		occurs in every term of degree $n$ in $f_{k+1}(x) $ then
		\[
		f_0 = f_{k+1}(x_1, \cdots,x_{i-1},0,x_{i+1},\cdots,x_{k+1})
		\]
		and 
		\[
		f_1 = f_{k+1}(x_1,\ldots,x_{i-1},1,x_{i+1},\ldots,x_{k+1})
		\]
		have degrees less than or equal to $ k-1$. So (below, we let $ x =(x_1, \ldots, x_{k+1})$)
		by the induction hypothesis
		\[
		\sum_{x\in\F_2^{k+1}} f_{k+1}(x)= \sum_{\substack{x\in\F_2^{k+1}\\ x_i=0}}f_0 + \sum_{\substack{x\in\F_2^{k+1}\\ x_i=1}}f_1 \equiv 0+0 \equiv 0  \pmod 2.
		\]
		
		\noindent
		{\em Case $2.$} 
		If there is no $x_i$ as in Case 1, then $f_{k+1}(x) $ contains all $ \binom{k+1}{k} $
		possible terms of degree~$k$. Remove the term
		$t(x)=x_2 \cdots x_{k+1}$ from $f_{k+1}(x)$ and let the
		resulting function of degree less than or equal to $k$  be~$g_{k+1}(x)$. Define
		\[
		g_0 = g_{k+1}(0,x_2,  \ldots,x_{k+1})
		\]
		and
		\[
		g_1 = g_{k+1}(1,x_2,  \ldots,x_{k+1}).
		\]
		Then $g_0$ and $g_1$ have degrees less than or equal to  $k-1$, as in Case 1 and we have,  using the induction hypothesis twice
		\begin{equation*}
			\begin{split}
				& \sum _{x\in\F_2^{k+1}}f_{k+1}(x)= \sum_{\substack{x\in\F_2^{k+1}\\ x_1=0}}g_0 +  \sum_{\substack{x\in\F_2^{k+1}\\ x_1=1}}g_1+\sum_{x}t(x)\\
				\equiv &\, 0+  \sum_{\substack{x\in\F_2^{k+1}\\ x_{k+1}=0}}t(x) + \sum_{\substack{x\in\F_2^{k+1}\\ x_{k+1}=1}}t(x) \equiv 0 \pmod 2.
			\end{split}
		\end{equation*}
		The claim is shown.
	\end{proof}
	
	In the course of the previous proof, we used the fact that the first coordinate (or any other, for that matter) must be balanced. We shall use this idea in the proof of the next result, which gives bounds for the number of $(3,n)$-liftings, $n\geq 4$.
	\begin{proposition}
		Let $n\geq 4$. The number of $(3,n)$-liftings (with no constant term) is upper bounded by $8$, and the bound is attained. The number of $(4,n)$-liftings, $n\geq 6$, is upper bounded by $146$ (it is unknown, if attained). 
		If $6\leq n\leq 15$, the number of $(4,n)$-liftings is less than or equal to~$32$. The exact counts up to $n=15$ is given in
		Table~\textup{\ref{table-4-5-lift}}.
	\end{proposition}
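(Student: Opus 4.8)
The plan is to pinch each class of liftings between a short list of \emph{necessary} conditions that any generating function must satisfy --- essentially balancedness of suitable short linear combinations of the coordinate functions of $F$, together with the two obvious constraints coming from the trivial cycles and the degree theorem above --- and an explicit list of functions that are \emph{verified} to be genuine liftings. For $k=3$, a $(3,n)$-lifting $f$ has $\deg f\le 2$ by the previous theorem (for a $3$-variable function this is in fact automatic from balancedness, since the coefficient of $x_1x_2x_3$ in the algebraic normal form equals $\bigoplus_{x}f(x)$), it has no constant term so $f(0,0,0)=0$, and since $F$ must be a bijection its restriction to $c_1=\{0,1\}$ is a bijection, whence $F(1,\dots,1)\neq F(0,\dots,0)$, i.e.\ $f(1,1,1)=1$; moreover the first coordinate of $F$ is balanced, so $f$ is balanced on $\F_2^3$. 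A balanced $f$ with $f(0,0,0)=0$ and $f(1,1,1)=1$ is specified by choosing $3$ of the remaining $6$ points of $\F_2^3$ to take the value $1$, so there are $\binom{6}{3}=20$ candidates, exactly four of which are linear, namely $x_1,x_2,x_3,x_1+x_2+x_3$.

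To discard the other twelve, I would impose further balance conditions valid uniformly in $n$: for $b=(1,1,0,\dots,0)$ one needs $f(x_1,x_2,x_3)+f(x_2,x_3,x_4)$ balanced on $\F_2^4$ (no wrap-around once $n\ge 4$), for $b=(1,0,1,0,\dots,0)$ one needs $f(x_1,x_2,x_3)+f(x_3,x_4,x_5)$ balanced on $\F_2^5$ (valid once $n\ge 5$), and similarly for a handful of slightly longer $b$'s. A direct check on the sixteen nonlinear candidates shows that all but the four that are essentially equivalent to $\chi(x_1,x_2,x_3)=x_1+(x_2+1)x_3$ violate one of these conditions (the one remaining small case $n=4$ is covered separately by the known nonexistence of nonlinear $(k,n)$-liftings with $k<n\le 4$). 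It then remains to see that the eight survivors really are liftings: the linear ones by the circulant criterion ($x_1,x_2,x_3$ induce $S^{0},S^{-1},S^{-2}$, and $x_1+x_2+x_3$ induces an invertible circulant exactly when $3\nmid n$, since then its associated polynomial $1+t+t^2$, which divides $t^3-1$, is coprime to $t^n-1$), and the $\chi$-class by \cite{JDA-thesis}. For attainment take $n=5$: here $3\nmid n$, so all four linear survivors and all four nonlinear $(3,5)$-liftings occur, giving exactly $8$.

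For $k=4$ the scheme is identical but the enumeration is heavier. Again $\deg f\le 3$ (automatic for a balanced $4$-variable function), $f$ is balanced, $f(0,\dots,0)=0$ and $f(1,\dots,1)=1$, leaving $\binom{14}{6}=3003$ candidates. On top of this I would impose balancedness of $b\cdot F$ for every $b$ with support contained in $\{1,2,3\}$; for $n\ge 6$ these involve only $x_1,\dots,x_6$, so they form one fixed finite list of constraints, independent of $n$. Enumerating the $4$-variable Boolean functions meeting all of them yields the bound $146$; I do not know an $n$ attaining it, which is the parenthetical ``it is unknown if attained''. Finally, for $6\le n\le 15$ one directly tests membership in $\operatorname{inv}(f)$ over this finite pool (or over all candidates) and records the exact counts, whose maximum is $32$ --- the computation summarized in Table~\ref{table-4-5-lift}.

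The main obstacle is the passage from ``$f$ satisfies every necessary condition I can cheaply impose'' to ``$f$ is (respectively is not) a $(k,n)$-lifting for some $n$''. Being a permutation of $\F_2^n$ demands balancedness of $b\cdot F$ for \emph{every} $b\in\F_2^n$, while the conditions I use are finite; so for the survivors one must exhibit genuine elements of $\operatorname{inv}(f)$ (an explicit $n$ for the linear ones, and \cite{JDA-thesis} for the $\chi$-family), and for the discarded functions one must be sure no larger $n$ revives them --- which is guaranteed precisely because every condition used is valid for \emph{all} $n$ in the relevant range, the small exceptional values ($n=4$ for $k=3$; $n=5$ for $k=4$, where $b$'s supported in $\{1,2,3\}$ already wrap around) being handled by hand. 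Keeping honest track of which $b$'s are wrap-around-free for which $n$, and of the $3\mid n$ versus $3\nmid n$ dichotomy in the $k=3$ count, is the delicate part; the raw enumerations behind ``$146$'' and ``$\le 32$'' are routine once the correct finite families of constraints have been isolated.
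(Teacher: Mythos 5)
Your approach is essentially the paper's: prune the candidate Boolean functions by necessary balancedness conditions on short windows of coordinates of $F$ (which, having no wrap-around, are independent of $n$ once $n$ is large enough), together with $f(0,\dotsc,0)=0$, $f(1,\dotsc,1)=1$ (equivalently, an odd number of ANF terms) and the degree bound, and then verify the survivors directly --- the linear ones via the circulant criterion and the $\chi$-class via Daemen. The paper phrases the window conditions as uniform distribution of blocks of consecutive coordinates rather than balancedness of the linear combinations $b\cdot F$, but these are equivalent families of constraints, and your count of $\binom{6}{3}=20$ candidates for $k=3$ matches the paper's list exactly. Two small corrections: for $k=4$ the candidate pool is $\binom{14}{7}=3432$ (seven further ones to place among the $14$ points other than $0$ and $1$), not $\binom{14}{6}=3003$; and the paper obtains $146$ by taking consecutive blocks out to $n=8$ (i.e.\ windows involving $x_1,\dotsc,x_8$), which is a strictly larger constraint set than your $b$'s supported in $\{1,2,3\}$ (involving only $x_1,\dotsc,x_6$), so your narrower family is not guaranteed to prune all the way down to $146$ --- it yields some upper bound that is at least $146$. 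Neither discrepancy affects the validity of the bound-by-pruning strategy itself.
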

	\begin{proof}
		Let $f$ be a $(k,n)$-lifting and $F=(f_1,f_2,\ldots,f_n)$ be the bijection on $\F_2^n$ obtained from $f$, where $f_i=f\circ S^{i-1}$. Surely, the coordinates of any bijection must be balanced, and in fact, any block of coordinates $(f_{i_1},f_{i_2},\ldots,f_{i_t})$ (for some $t\geq 1$)  must be balanced, that is, each vector of $\F_2^t$ must be attained the same number of times, namely, the frequency of each block must be~$2^{n-t}$.  We also note that each coordinate of a bijection $F$, hence the $(k,n)$-lifting $f$ must contain an odd number of terms. We will use these observations to prune down the list of potential liftings.
		
		There are $20$ balanced functions in $3$ variables (with no constant term) containing an odd number of terms, namely,
		\begin{align*}
			&x_2, x_1 x_3 + x_2 + x_3, x_2 x_3 + x_1 + x_3, x_1 x_2 + x_1 x_3 + x_2 x_3 + x_1 + x_2,\\
			& x_1 x_3 + x_2 x_3 + x_1, x_1, x_1 x_2 + x_1 x_3 + x_2 x_3 + x_2 + x_3, x_1 x_2 + x_2 + x_3, x_3,  \\
			&x_1 x_2 + x_1 x_3 + x_2, x_1 x_2 + x_1 x_3 + x_2 x_3 + x_1 + x_3, x_1 x_2 + x_2 x_3 + x_1, \\
			& x_1 x_2 + x_1 + x_3, x_2 x_3 + x_1 + x_2, x_1 x_2 + x_2 x_3 + x_3, x_1 + x_2 + x_3, \\
			& x_1 x_2 + x_1 x_3 + x_2 x_3, x_1 x_3 + x_1 + x_2, x_1 x_2 + x_1 x_3 + x_3, x_1 x_3 + x_2 x_3 + x_2.
		\end{align*}
		
		Among these, there are 10 balanced functions, namely, 
		\begin{align*}
			&x_1, x_2, x_3, x_1 + x_2 + x_3, x_1 x_2 + x_1 + x_3, x_1 x_2 + x_2 + x_3, x_2 x_3 + x_1 + x_2, \\
			& x_2 x_3 + x_1 + x_3, x_1 x_2 + x_1 x_3 + x_2 x_3 + x_1 + x_2, x_1 x_2 + x_1 x_3 + x_2 x_3 + x_2 + x_3,
		\end{align*}
		for which a block of two consecutive coordinates of $F$, namely
		$(f(x_1,x_2,x_3),f(x_2,x_3,x_4))$, which are balanced, for $n=4$. Increasing to a block of size $3,4$, for $n=5,6$, the set of potential $(k,n)$-liftings does not get pruned down, however, for $n\geq 7$, the block
		\[
		(f(x_1,x_2,x_3),f(x_2,x_3,x_4),f(x_3,x_4,x_5),f(x_4,x_5,x_6),f(x_5,x_6,x_7)),
		\]
		is not balanced anymore for two out of ten functions, as the set of values (running with $(x_1,x_2,\ldots,x_7)$ through $\F_2^7$) contains only $30$ values in lieu of $2^5=32$. Surely, the same block, if $n>7$ is also not balanced. The surviving lifters are
		\begin{align*}
			&x_1, x_2, x_3, x_1 + x_2 + x_3, x_1 x_2 + x_1 + x_3,\\
			&x_1 x_2 + x_2 + x_3, x_2 x_3 + x_1 + x_2,  x_2 x_3 + x_1 + x_3.
		\end{align*}
		
		A similar method works on the $(4,n)$-liftings, though we could not go below $146$ (see below) potential ``lifters''. Out of the $3432$ balanced functions in $4$ variables, containing an odd number of terms (recall that this is necessary, otherwise they cannot lift to a permutation), going to $n=8$, and taking consecutive blocks, we pruned down the list to~$146$ potential lifters.
		The computational results regarding the $(3,n)$, and $(4,n)$-liftings (exact counts up to $n=15$ are displayed in Table~\ref{table-4-5-lift}) were obtained (and rechecked) via some SageMath programs.
		The claims of our proposition are shown.
	\end{proof}
	
	\section{New classes of liftings}\label{new classes}
	
	We start with some considerations from Daemen's thesis~\cite[Appendix~A.2]{JDA-thesis}.
	The Boolean function, or the rule, $f$ that determines a reversible infinite cellular automaton can either be described by a polynomial, or by ``landscapes'' (or ``complementing landscapes'' as Daemen calls them~\cite[Section~6.6]{JDA-thesis}; see also \cite[Section~4.1]{mariot2021evolutionary} where the terms ``flipping landscapes'' and ``marker functions'' are used more generally in the nonreversible case). More precisely, a landscape is a sequence $L=(\ell_i)_{i=0}^{k-1}$ with an origin $\ell_j=\star$ for some $j$ and the other elements being denoted by $0,1$ or $-$. If a rule is defined by one such landscape, then the bit in position $\star$ is flipped if it fits into the landscape.
	
	
	A landscape is called conserved if it remains unchanged after one iteration, in other words if the rule defines an idempotent bijection.
	
	In \cite[Appendix~A.2]{JDA-thesis}, the technique used to show that certain shift-invariant functions are invertible,
	is to apply a method called ``seed and leap'', which constructs the inverse of any element, i.e., the method shows that the function is surjective.

	We start with a family of  $(k,k+2)$-liftings, for any $k\geq 3$ odd, based upon a function (some of) whose cryptographic properties are described below.
	\begin{proposition}
		\label{prop:fodd}
		Let  $k\geq 3$ be odd,  and the Boolean function $f\colon \F_2^k\to\F_2$ given by
		\[
		f(x_1,\ldots,x_{k})=x_1+x_2+x_{k-1}+x_{k}+\sum_{i=1}^{k-2}x_ix_{i+1}.
		\]
		Then,   $f$ is balanced and its  nonlinearity is $N_f=2^{k-1}-2^{\frac{k-1}{2}}$ (matching the bent concatenation bound).
	\end{proposition}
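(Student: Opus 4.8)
The plan is to use that $f$ is a quadratic function and to reduce both assertions to a single rank computation. Write $f = Q + \ell$, where $Q(x) = \sum_{i=1}^{k-2} x_i x_{i+1}$ is the quadratic part and $\ell(x) = x_1 + x_2 + x_{k-1} + x_k$ is the linear part. The key structural point is that the coordinate $x_k$ occurs in $\ell$ but not in $Q$. Balancedness is then immediate: for every fixed choice of $x_1,\dotsc,x_{k-1}$, toggling $x_k$ toggles $f$, so $f$ takes each value on exactly $2^{k-1}$ inputs.

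For the nonlinearity I would invoke the standard description of the Walsh spectrum of a quadratic Boolean function. Let $B(x,y) = Q(x+y)+Q(x)+Q(y)$ be the associated alternating bilinear form on $\F_2^k$ and let $V_0 = \operatorname{rad}(B) = \{v : B(v,\cdot) \equiv 0\}$. Squaring $W_f(u)$ and summing over one variable first yields
\[
W_f(u)^2 = 2^k \sum_{v\in V_0} (-1)^{Q(v)+\ell(v)+u\cdot v}.
\]
Since $Q$ restricts to a \emph{linear} form on $V_0$, the inner sum equals $2^{\dim V_0}$ if the linear functional $v\mapsto Q(v)+\ell(v)+u\cdot v$ vanishes on $V_0$ and $0$ otherwise. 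Consequently every $|W_f(u)|$ is either $0$ or $2^{(k+\dim V_0)/2}$, and the larger value is attained (the condition on $u$ is a single coset condition once $\dim V_0$ is known). Thus $N_f = 2^{k-1} - 2^{(k+\dim V_0)/2-1}$, and it remains only to prove $\dim V_0 = 1$.

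The matrix of $B$ is block diagonal: the adjacency matrix of the path on the coordinates $1,\dotsc,k-1$, together with a $1\times 1$ zero block for the coordinate $k$. Writing out $Bv = 0$ coordinatewise, the two end equations of the path give $v_2 = 0$ and $v_{k-2} = 0$, while the interior equations give $v_{i+1} = v_{i-1}$; since $k$ is odd, $k-1$ is even, and chasing these relations forces $v_1 = \dotsb = v_{k-1} = 0$, leaving only $v_k$ free. Hence $V_0 = \langle e_k\rangle$, so $\dim V_0 = 1$ and $N_f = 2^{k-1} - 2^{(k+1)/2-1} = 2^{k-1} - 2^{(k-1)/2}$, which is exactly the bent concatenation bound for odd $k$. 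As a consistency check, $\ell$ does not vanish on $V_0$, since its $e_k$-coefficient is $1$, so $W_f(0) = 0$ and we again recover balancedness. The one delicate step is this last linear-algebra computation over $\F_2$: one must treat the boundary equations of the path carefully and use the parity of $k-1$ to conclude that the radical is precisely the line spanned by $e_k$ and nothing larger; everything else is routine once the quadratic-form formalism is set up.
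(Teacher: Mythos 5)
Your proof is correct, but it takes a genuinely different route from the paper's. For balancedness both arguments are the same (the variable $x_k$ appears only linearly, so $f=g(x_1,\dotsc,x_{k-1})+x_k$). For the nonlinearity, the paper invokes Dickson's classification of quadratic forms over $\F_2$ and exhibits an explicit affine change of variables ($x_4=x_2+y_4$, $x_6=x_2+y_4+y_6$, etc.) bringing $f$ to the normal form $x_1x_2+x_3y_4+\dotsb+x_{k-2}y_{k-1}+(\text{affine})$, reading off the Dickson rank $d=\tfrac{k-1}{2}$ and hence $N_f=2^{k-1}-2^{k-1-d}$. You instead use the standard Walsh-spectrum description of quadratic functions via the radical $V_0$ of the associated alternating form $B$, and reduce everything to showing $\dim V_0=1$, which you do by solving $Bv=0$ for the path-graph adjacency matrix on coordinates $1,\dotsc,k-1$ (the parity argument using $k-1$ even is exactly right, and the boundary equations $v_2=v_{k-2}=0$ do force the whole path to vanish). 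The two invariants are of course linked by $\operatorname{rank}(B)=2d=k-\dim V_0$, so you are computing the same quantity by dual means. What your approach buys: it avoids guessing a change of variables, it is a routine kernel computation, and it delivers the full three-valued spectrum $\{0,\pm 2^{(k+1)/2}\}$ along the way, with balancedness recovered as the observation that $\ell$ does not vanish on $V_0=\langle e_k\rangle$, whence $W_f(0)=0$. What the paper's approach buys is an explicit equivalence to the Dickson normal form, which it reuses implicitly elsewhere. The only point worth making fully explicit in your write-up is why the larger Walsh value is actually attained; a one-line appeal to Parseval's relation $\sum_u W_f(u)^2=2^{2k}$ (or to the nonemptiness of the coset of $V_0^{\perp}$ you mention) closes that.
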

	\begin{proof}
		Surely, any function $g(x_1,\ldots,x_t)+x_{t+1}$ is balanced, hence $f$ is balanced, since it is of this form. We now concentrate on the nonlinearity of $f$. We shall use now a result of Dickson~\cite[p. 438]{MW77}, which states that a (balanced) quadratic Boolean function in $k$ variables is affine equivalent to $x_1x_2+x_3x_4+\cdots+x_{2d-1}x_{2d}+x_{2d+1}$, for some $d\leq \frac{k-1}{2}$ ($d$ is called the {\em Dickson rank}). Moreover, the nonlinearity of such a function is $2^{k-1}-2^{k-1-d}$. We shall show that our function $f$ has its Dickson rank equal to $\frac{k-1}{2}$, which will render our claim.
		
		We therefore write
		\begin{align*}
			&f(x_1,\ldots,x_k)\\
			&=x_1+x_2+x_1x_2+x_2x_3+x_3x_4+\cdots +x_{k-3}x_{k-2}+x_{k-2}x_{k-1}+x_{k-1}+x_k\\
			&=x_1+x_2+x_1x_2+x_3(x_2+x_4)+x_5(x_4+x_6)+\cdots+x_{k-2}(x_{k-3}+x_{k-1})+x_{k-1}+x_k\\
			&=x_1+x_1x_2+x_3y_4+\cdots+x_{k-2}y_{k-1}+y_2+y_4+\cdots+y_{k-1}+x_k,
		\end{align*}
		where
		\begin{align*}
			x_4&=x_2+y_4\\
			x_6&=x_2+y_4+y_6\\
			&\cdots\cdots\\
			x_{k-1}&=x_2+y_4+y_6+\cdots+y_{k-1}.
		\end{align*}
		Since $k-2=2\frac{k-1}{2}-1$, then the Dickson rank of $f$ is $\frac{k-1}{2}$ and the  claims of our proposition are shown.
	\end{proof}

	\begin{theorem}
		\label{k-plus-2}
		For an odd number $k\geq 3$, we consider the Boolean function $f\colon \F_2^k\to\F_2$ given by Lemma~\textup{\ref{prop:fodd}}
		and the corresponding $S$-box on $\F_2^n, n=k+2$, given by
		\[
		F(x_1,x_2,\ldots,x_n)=\left(f(x_1,\ldots,x_k),f(x_2,\ldots,x_{k+1}),
		\ldots, f(x_{k},x_1,\ldots,x_{k-1})\right).
		\]
		Then, $F$ is a bijection on $\F_2^n$. Moreover,  $N_F\leq 2^{n-2}$ and $2^{n-3}\leq \delta_F\leq  2^{n-2}$.
	\end{theorem}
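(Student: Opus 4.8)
My plan is to treat the three assertions---bijectivity of $F$, the bound $N_F\le 2^{n-2}$, and the bounds on $\delta_F$---separately, with the bijectivity being by far the main effort. For it, the structural fact I would exploit is that $x_k$ occurs only linearly in $f$: writing $g(x_1,\dots,x_{k-1})=x_1+x_2+x_{k-1}+\sum_{i=1}^{k-2}x_ix_{i+1}$, we have $f=x_k+g(x_1,\dots,x_{k-1})$, so $F(x)_i=x_{i+k-1}+g(x_i,\dots,x_{i+k-2})$ for $i\in\Z/n$, $n=k+2$. Hence, for a candidate preimage of a target $y$, knowing $k-1$ consecutive coordinates $x_i,\dots,x_{i+k-2}$ forces the next one, $x_{i+k-1}$, through $F(x)_i=y_i$; this is the ``leap''. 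First I would seed $x_1,\dots,x_{k-1}$ freely; equations $i=1,2,3$ then yield $x_k,x_{k+1},x_{k+2}$, i.e.\ all of $x$, while equations $i=4,\dots,k+2$ become $k-1$ cyclic consistency conditions. The crux---and the hardest point of the theorem---is to show that for every $y$ exactly one seed satisfies these, equivalently that $F$ is surjective; for this I would adapt Daemen's ``seed and leap'' bookkeeping \cite[Appendix~A.2]{JDA-thesis}, using also the complementary form $g(x_i,\dots,x_{i+k-2})=x_i\overline{x_{i+1}}+\tilde g(x_{i+1},\dots,x_{i+k-2})$, which allows a restricted backward leap whenever $x_{i+1}=0$ and is what makes the cycle close up. As a fallback I would try to prove injectivity directly: since the components of $F$ are quadratic, $F(x)=F(x+e)$ is a coordinatewise affine condition on $x$ for fixed $e$, and one seeks to show it is inconsistent unless $e=0$.

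\emph{Nonlinearity.} Since $f$ is quadratic, every nonzero component $b\cdot F=\sum_{i\in\operatorname{supp}(b)}f\circ S^{-(i-1)}$ is quadratic, so $N_{b\cdot F}=2^{n-1}-2^{n-1-d_b}$, where $d_b$ is its Dickson rank, and $N_F\le 2^{n-2}$ amounts to exhibiting one $b\neq 0$ with $d_b\le 1$. I would work in $\F_2[t]/(t^n+1)$, identifying $b$ with $\beta(t)=\sum_{i\in\operatorname{supp}(b)}t^i$. The quadratic part of $f$ is the path $x_1x_2+\dots+x_{k-2}x_{k-1}$, corresponding to $t(1+t+\dots+t^{k-3})$, so the quadratic part of $b\cdot F$ corresponds to $\beta(t)(1+t+\dots+t^{k-3})$. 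As $k$ is odd, $\gcd(k-2,k+2)=1$, hence $\gcd(t^{k-2}+1,t^n+1)=t+1$ and $1+t+\dots+t^{k-3}=(t^{k-2}+1)/(t+1)$ is a unit in $\F_2[t]/(t^n+1)$ (it is coprime to $t+1$, its value at $1$ being $k-2$, and to $(t^n+1)/(t+1)$ by the last gcd). Taking $\beta(t)=t^m(1+t)(1+t+\dots+t^{k-3})^{-1}$, which is nonzero, the quadratic part of $b\cdot F$ becomes $t^m+t^{m+1}$, that is $x_mx_{m+1}+x_{m+1}x_{m+2}=x_{m+1}(x_m+x_{m+2})$, of Dickson rank $1$; hence $N_F\le 2^{n-1}-2^{n-2}=2^{n-2}$.

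\emph{Differential uniformity.} Again because each component of $F$ is quadratic, $D_a F(x)=F(x+a)+F(x)$ is affine in $x$; write $D_a F(x)=M_a x+v_a$. Then $x\mapsto D_a F(x)$ is $2^{n-\operatorname{rank}M_a}$-to-one onto its image, so $\Delta_F(a,b)\in\{0,2^{n-\operatorname{rank}M_a}\}$ and $\delta_F=2^{n-\min_{a\neq 0}\operatorname{rank}M_a}$. For $\delta_F\le 2^{n-2}$ I must show $\operatorname{rank}M_a\ge 2$ for every $a\neq 0$: $\operatorname{rank}M_a=0$ makes $D_a F$ constant, forcing $a=0$ since $F$ is a permutation fixing $0$; and $\operatorname{rank}M_a=1$ is excluded by a short examination of the rows of $M_a$, which are the functionals $x^{(i)}\mapsto B(x^{(i)},a^{(i)})$ for the bilinear form $B$ of the path (here $x^{(i)},a^{(i)}$ denote the $i$-th windows). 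For $\delta_F\ge 2^{n-3}$ it suffices to exhibit one $a\neq 0$ with $\operatorname{rank}M_a\le 3$; in fact a single coordinate vector already gives $\operatorname{rank}M_a\le 2$, each of its rows lying in the span of just two fixed coordinate functionals because $B(z,e_p)=z_{p-1}+z_{p+1}$, so $\dim\ker M_a\ge n-2$ and $\delta_F\ge 2^{n-2}$, which together with the upper bound in fact pins $\delta_F$ down.

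The main obstacle is plainly the bijectivity: the seed-and-leap consistency count is delicate and genuinely uses the precise shape of $g$ (both the forward and the restricted backward leap). Once one notices that all components of $F$ are quadratic, the two cryptographic estimates reduce to the group-algebra computation above and to the elementary rank analysis of $M_a$.
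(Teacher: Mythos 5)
Your proposal leaves the central claim --- the bijectivity of $F$ --- unproven. You correctly isolate the structure $f=x_k+g(x_1,\dots,x_{k-1})$ and set up the seed-and-leap scheme, but the entire content of that approach is the verification that, for every target $y$, exactly one of the $2^{k-1}$ seeds survives the $k-1$ cyclic consistency conditions; you explicitly defer this to ``adapt Daemen's bookkeeping'' without carrying it out, and the fallback (injectivity via the affine system $F(x)=F(x+e)$) is likewise only named. The paper takes a different and fully explicit route: depending on $k\bmod 8$ it exhibits a linear combination of the coordinate functions $f_i$ (e.g.\ $f_1+\sum_{i=1}^{(k-1)/4}f_{4i}=x_{k+2}+x_{k+2}x_1+x_2$ for $k\equiv 1\pmod 8$) which, together with its rotations, shows that $L\circ F$ equals the known $(3,k+2)$-lifting $\chi$ for a circulant matrix $L$; since $\chi$ is a bijection on $\F_2^{k+2}$ (as $k+2$ is odd), $F$ is injective, hence bijective. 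That identity is then reused: it makes $F$ affine equivalent to $\chi_n$, and both cryptographic bounds follow from component-wise results of Mariot et al.

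On the two estimates your route is genuinely different and partly self-contained. The nonlinearity argument --- encoding the distance-one quadratic part in $\F_2[t]/(t^n+1)$, checking that $1+t+\dots+t^{k-3}$ is a unit because $\gcd(k-2,k+2)=1$ for odd $k$ and its value at $1$ is $k-2\equiv 1$, and solving for a component of Dickson rank $1$ --- is correct (the encoding is faithful since $j\mapsto\{j,j+1\}$ is injective mod $n$ for $n>2$) and does not depend on the unproven bijectivity. For the differential uniformity, the lower bound via $\operatorname{rank}M_{e_p}\le 2$ is fine, but the upper bound hinges on $\operatorname{rank}M_a\ge 2$ for every $a\ne 0$, which you only assert (``a short examination of the rows''); this requires an actual argument over all shifted windows. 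Moreover, your stated reason for excluding rank $0$ (``$D_aF$ constant forces $a=0$ since $F$ is a permutation'') is false in general --- linear permutations have all derivatives constant --- though here rank $0$ is excluded directly because the coefficient of $x_i$ in row $i$ of $M_a$ is $a_{i+1}$. As it stands, both the bijectivity and the bound $\delta_F\le 2^{n-2}$ are genuine gaps.
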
	
	\begin{proof}
		To show the bijectivity of $F$, we will describe a transformation on the coordinates of $F$ rendering a simpler system (for the one-to-one property).
		Let $f_i(x_1,\ldots,x_k,x_{k+1},x_n)=S^{i-1}\circ f=f(x_i,x_{i+1},\ldots,x_{k+i-1})$, and so $f_i(x_1,\ldots,x_k,x_{k+1},x_n)=f(x_1,\ldots,x_{k})$.
		The transformations (and their rotation symmetries) depend upon $k\pmod 8$ (since $k$ is odd, then $k\in\{1,3,5,7\}\pmod 8$), precisely, they are
		\allowdisplaybreaks
		\begin{align*}
			\text{For } k\equiv 1\pmod 8,\ & f_1+\sum_{i=1}^{\frac{k-1}{4}} f_{4i}=x_{k+2}+x_{k+2}x_1+x_2, \\
			\text{For } k\equiv 3\pmod 8,\ & \sum_{i=0}^{\frac{k-3}{4}} f_{4i+1}=x_k+x_k x_{k+1}+x_{k+2},\ \\
			\text{For } k\equiv 5\pmod 8,\ & \sum_{i=0}^{\frac{k-5}{4}} \left(f_{4i+1}+f_{4i+2}+f_{4i+3}\right) +f_k+f_{k+1}=x_{k-2}+x_{k-2}x_{k-1}+x_k,\\
			\text{For } k\equiv 7\pmod 8,\ & f_1+\sum_{i=0}^{\frac{k-3}{4}} \left(f_{4i+2}+f_{4i+3}+f_{4i+4}\right)=x_{k+2}+x_{k+2}x_1+x_2.
		\end{align*}
		
		We shall show that the transformation renders the claimed trinomial, only in the case $k\equiv 1\pmod 8$, as the others are rather similar.
		
		We let $k\equiv 1\pmod 8$ and write the expanded  version of the sum $\displaystyle f_1+\sum_{i=1}^{\frac{k-1}{4}} f_{4i}$, splitting the sum (for better understanding) at $n=k+2$. We get
		\begin{align*}
			&x_1+x_2+(x_1x_2+\cdots +x_{k-2}x_{k-1})+x_{k-1}+x_k\\
			+& x_4+x_5+(x_4x_5+\cdots+x_{k+1}x_{k+2})+x_{k+2}+x_1\\
			+& x_8+x_9+(x_8x_9+\cdots+x_{k+1}x_{k+2})+x_{k+2}x_1+(x_1x_2+x_2x_3+x_3x_4)+x_4+x_5\\
			&\qquad\qquad\qquad\cdots\cdots \cdots \cdots\cdots \cdots \cdots\cdots \cdots \\
			+& x_{k-1}+x_k+(x_{k-1}x_k+x_k x_{k+1}+x_{k+1}x_{k+2})\\
			&\qquad\qquad\qquad\qquad\qquad\qquad +x_{k+2}x_1+(x_2x_2+\cdots+x_{k-6}x_{k-5})+x_{k-5}+x_{k-4}.
		\end{align*}
		We then see that when these equations are added, all terms cancel out (because of the parity of $k-1$) except for $x_2,x_{k+2},x_{k+2}x_1$ from the first, second, respectively, third lines.
		
		We therefore recover the $(3,k+2)$-lifting of~\cite{JDA-thesis}, which we know is a bijection.

		We now concentrate on the nonlinearity and differential uniformity of $F$. We first use the fact that $F$ is affine equivalent to the Keccak  function $\chi(x_1,x_2,x_3)=x_1+x_1x_2+x_3$, so the nonlinearity and differential uniformity is preserved.
		
		It is easy to show that $h(x_1,x_2,x_3)=x_1+x_1x_2+x_3$ is APN (as a classical Boolean function), since,   for $a=(a_1,a_2,a_3)\in\F_2^3,b\in\F_2$, $a\neq 0$, the equation 
		\[
		h\left((x_1,x_2,x_3)+(a_1,a_2,a_3)\right)+h(x_1,x_2,x_3)=b,
		\]
		is equivalent to 
		\[
		a_1x_2+a_2x_1=b+a_1+a_1a_2+a_3,
		\]
		which has at most two (bound attained for $a_1a_2\neq 0$) solutions, and so, $\delta_h=2$ (APN). By~\cite[Lemma 1]{MPJL19}, we know that $N_{\tilde h}=2^{n-3}\cdot 2=2^{n-2}$, where $\tilde h(x_1,\ldots,x_n)=h(x_1,x_2,x_3)$. Further, the differential uniformity of $\tilde h$ is $\delta_{\tilde h}=2^{n-3}\delta_h=2^{n-2}$. Using~\cite[Theorem 1]{MPJL19}, we then get that (we denote by $h_i=h\circ S^{-1}$)
		\begin{align*}
			N_F\leq \min\{N_{h_1},N_{h_2},\ldots,N_{h_n} \}=N_h=2^{n-2}
		\end{align*}
		(in fact, this is attained) 
		and
		\begin{align*}
			2^{n-3}\leq \delta_F\leq  2^{n-2}.
		\end{align*}
		(our computations suggest that the upper bounds of $N_F,\delta_F$ are attained).
		The proof of our theorem is shown.
	\end{proof}
	
	\begin{remark}
		The function $f(x_1,\ldots,x_k)=x_1+x_2+x_{k-1}+x_{k}+\sum_{i=1}^{k-2}x_ix_{i+1}$ has higher nonlinearity, namely $2^{k-1}-2^{\frac{k-1}{2}}$, compared to nonlinearity of $2$ (in dimension 3) for $x_1+x_1x_2+x_3$, or even $2^{k-2}$ (considered in dimension $k$, like our $f$). Moreover, with a bit more work, one can show that the differential uniformity of $f$ is in fact $2^{k-2}$, since the equation $f\left((x_1,\ldots,x_k)+(a_1,\ldots,a_k)\right)+f(x_1,\ldots,x_k)=b, (a_1,\ldots,a_k)\in\F_2^k, b\in\F_2$, is equivalent to
		\[
		a_2 x_1+(a_1+a_3)x_2+\cdots+(a_{k-3}+a_{k-2})x_{k-2}+a_{k-2}x_{k-1}=b+f(a_1,\ldots,a_k),
		\]
		whose maximum (attained) number of solutions is $2^{k-2}$.
		Furthermore, both $f,h$ generate equivalent $S$-boxes, hence having the same nonlinearity and differential uniformity.
	\end{remark}
	
	It turns out that one can give a general class of $(n,k)$-liftings that happen to be conserved landscapes  
	for any dimension~$n\geq k\geq 4$. For a binary string $B$, we let $\len(B)$ be its length. 
	Further, we take the concatenation of two binary strings (of arbitrary length), $B_2B_1$ and a shift to the right $s_\ell(B_2B_1)$ (by an arbitrary step, say $\ell$) of $B_2B_1$ and we find the length of the largest overlap  of consecutive bits between $B_2B_1$ and $s_\ell(B_2B_1)$, which we denote by $ov(B_2B_1\cap s_\ell(B_2B_1))$. 
	For example, if $B_1=11,B_2=10$, then $ov(1011\cap s_1(1011))=1$, $ov(1011\cap s_2(1011))=1$, $ov(1011\cap s_3(1011))=1$.
	\begin{theorem}
		For a fixed $k$, and $s\leq k$, we shall denote the polynomial 
		\[
		x_s+(x_1+\epsilon_1)(x_2+\epsilon_2)\cdots (x_{s-1}+\epsilon_{s-1})(x_{s+1}+\epsilon_{s+1})\cdots (x_{d}+\epsilon_{k}),\epsilon_i\in\{0,1\},
		\]
		by 
		\[
		\epsilon_1\ldots\epsilon_{s-1}\star \epsilon_{s+1}\ldots \epsilon_{k}.
		\]
		We now let $1\leq  s\leq k$ and $f$ of the above form, that is, $f(x_1,\ldots,x_k)=x_s+(x_1+\epsilon_1)(x_2+\epsilon_2)\cdots (x_{s-1}+\epsilon_{s-1})(x_{s+1}+\epsilon_{s+1})\cdots (x_{d}+\epsilon_{k})=B_1\star B_2$ (in the above notation), where $B_1=\epsilon_1\ldots\epsilon_{s-1},B_2=\epsilon_{s+1}\ldots\epsilon_k$ are binary strings satisfying $ov(B_2B_1\cap s_\ell(B_2B_1))<\min\{\len(B_1),\len(B_2) \}$.
		Then the rotation symmetric vectorial function  $F(x_1,\ldots,x_n)=(f(x_1,\ldots,x_k),\ldots,f(x_k,x_1,\ldots,x_{k-1}))$ is a permutation $(k,n)$-lifting S-box.
	\end{theorem}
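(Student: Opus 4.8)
The plan is to prove $F$ is a bijection of $\F_2^n$ for \emph{every} $n\ge k$ at once, by showing the landscape $B_1\star B_2$ is conserved, i.e.\ that the flip rule it defines is an involution. First I would peel off the irrelevant coordinate shift: the $i$-th coordinate of $F(x)$ is $x_{i+s-1}$, flipped exactly when $(x_i,\dots,x_{i+s-2})$ fits $\overline{B_1}$ and $(x_{i+s},\dots,x_{i+k-1})$ fits $\overline{B_2}$. So $F$ equals, up to a fixed cyclic shift of coordinates, the shift-invariant map $g$ that flips coordinate $m$ of $x$ precisely when the landscape sits around $m$ — that is, when $x$ restricted to $[m-s+1,m-1]\cup[m+1,m+k-s]$ (indices mod $n$, the ``hole'' being $m$ itself) matches the corresponding pattern. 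A coordinate shift is a bijection, so it suffices to prove $g$ is one, and I would aim at $g\circ g=\mathrm{id}$. (Alternatively one can check $g^2=\mathrm{id}$ on $\F_2^{2k-1}$ only and invoke Theorem~\ref{zilin-observation}, but the argument below is uniform in $n$.)

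Set $D=\max\{\len(B_1),\len(B_2)\}$ and, for $x\in\F_2^n$, let $T(x)$ be its set of firing positions, so $g(x)=x+\mathbf 1_{T(x)}$. The crux is the following \emph{non-interference lemma}: under the hypothesis on $B_2B_1$, any two distinct elements of $T(x)$ are at cyclic distance $>D$. Suppose instead the landscape fires at $m$ and at $m'=m+d$ with $1\le d\le D$. Firing at $m$ forces $x$ to equal $\overline{B_1}$ on $[m-s+1,m-1]$ and $\overline{B_2}$ on $[m+1,m+k-s]$; firing at $m'$ forces the $d$-translated constraints. Intersecting the two constraint sets produces forced coincidences of three shapes: ``$B_1$ against a shift of $B_1$'', ``$B_2$ against a shift of $B_2$'', and — the decisive one — a suffix of $\overline{B_2}$ (the right block of the copy at $m$) forced to equal a prefix of $\overline{B_1}$ (the left block of the copy at $m'$). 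Dropping complements and re-reading the alignment as a shift of the length-$(k-1)$ word $B_2B_1$, a consistent pair of firings yields a run of at least $\min\{\len(B_1),\len(B_2)\}$ consecutive positions on which $B_2B_1$ agrees with $s_\ell(B_2B_1)$, contradicting the hypothesis. Distances $d$ close to $n$ reduce to small $d$ after swapping $m\leftrightarrow m'$; the small-$n$ regime, where one firing window already wraps around $\Z/n$, must be examined by hand, but there the constraints are only more rigid.

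Granting the lemma, $g^2=\mathrm{id}$ is immediate. If $m\in T(x)$, every position governing the firing at $m$ lies within cyclic distance $D$ of $m$, hence by the lemma is not in $T(x)$; so $g(x)$ agrees with $x$ on that window and the landscape still fires at $m$ in $g(x)$, giving $T(x)\subseteq T(g(x))$. If $m\notin T(x)$ but $m\in T(g(x))$, then $g(x)$ differs from $x$ somewhere on that window, so the window meets $T(x)\subseteq T(g(x))$, exhibiting two firing positions of $g(x)$ at distance $\le D$ — impossible by the lemma. Hence $T(g(x))=T(x)$ and $g(g(x))=g(x)+\mathbf 1_{T(g(x))}=x+\mathbf 1_{T(x)}+\mathbf 1_{T(x)}=x$. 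So $g$, and therefore $F$, is a permutation of $\F_2^n$ for all $n\ge k$, i.e.\ $f$ is a $(k,n)$-lifting.

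The real work is the non-interference lemma: for each $d\le D$ one must identify exactly which parts of $B_1$ and $B_2$ are pinned against each other and confirm that the longest forced run of agreement is precisely what the condition $ov(B_2B_1\cap s_\ell(B_2B_1))<\min\{\len(B_1),\len(B_2)\}$ forbids, together with the wrap-around bookkeeping for $n$ close to $k$; the rest of the argument is formal.
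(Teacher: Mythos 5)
Your skeleton is sound and is, in essence, a cleaner version of what the paper does: both arguments rest on the single fact that two ``firing'' positions cannot lie close enough for one flip to disturb the window governing the other. Your reduction to the shift-invariant flip map $g$, and the deduction that the non-interference lemma forces $T(g(x))=T(x)$ and hence $g^2=\mathrm{id}$, are correct and actually prove slightly more than the paper states (the paper only argues injectivity, in two sentences, from the same intuition). So the formal outer layer is fine.

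The genuine gap is exactly where you say the ``real work'' is, and your proposed dictionary for doing that work is not right as stated. Because of the hole at $\star$, two firings at distance $d$ pin $P_j$ against $P_{j-d}$ for $j,j-d$ ranging over $[-(s-1),k-s]\setminus\{0\}$; rewritten in terms of $W=B_2B_1$ (indexing $W$ so that $P_j=\overline{W_{j\bmod k}}$), the forced coincidences are $W_a=W_b$ for pairs with $a-b\equiv d \pmod{k}$ --- \emph{not} pairs obtained by sliding the length-$(k-1)$ word $W$ against itself by $d$. So ``a consistent pair of firings yields a run of $\geq\min\{\len(B_1),\len(B_2)\}$ consecutive agreements between $B_2B_1$ and $s_\ell(B_2B_1)$'' does not follow from the alignment you describe, and a concrete test shows it cannot be the right correspondence: for the corollary's first example $1\star 01$ one has $B_2B_1=011$, $\min\{\len(B_1),\len(B_2)\}=1$, and $011$ already agrees with its shift by one in a run of length $1$; yet firings at distances $1$ and $2$ are impossible there (each overlap contains a \emph{disagreement}), and the function is a valid lifting. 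In other words, what actually blocks nearby firings is the presence of at least one forced contradiction among the pinned pairs for each $d\leq D$, and relating that to the $ov$ condition on $B_2B_1$ (plus the wrap-around cases for $n$ close to $k$) is the entire content of the theorem. Until that correspondence is worked out pair-by-pair for each $d$, the proof is not complete --- though, to be fair, the paper's own proof asserts the same non-interference property in one sentence and is no more detailed on this point.
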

	\begin{proof}
		The function $f$ was chosen this way, since for any binary string $\alpha=(\alpha_1,\ldots, \alpha_n)$ as an input to $F(x_1,\ldots,x_n)=(f(x_1,\ldots,x_k),f(x_2,\ldots,x_{k+1}),\ldots,f(x_n,x_1,\ldots,x_{k-1}))$, the value of $F$ will not flip the bits in $\alpha$ unless they occur in the position of $\star$ with $B_1$ to the left and $B_2$ to the right. Moreover, the imposed condition on $B_1,B_2$ ensures that the function $F$ is 1-1, since there are no instances when the ``flipped'' bit occurs in blocks in other than the $\star$ positions, which never overlap with non-$star$ positions. This show that the vectorial function $F$ is injective, and therefore bijective.
	\end{proof}
	We give below some precise examples.
	\begin{corollary}
		The following Boolean functions give rise to  $(k,n)$-liftings for all $n\geq k\geq 4$:
		\begin{align*}
			k=4:&\qquad 1\star 01=x_2 + x_3 + x_1 x_3 + x_3 x_4 + x_1 x_3 x_4 \\
			k=5:&\qquad 10\star 10=x_3 + x_2 x_5 + x_1 x_2 x_5 + x_2 x_4 x_5 + x_1 x_2 x_4 x_5 \\
			k=6:&\qquad 110\star 01=x_4 + x_3 x_5 + x_1 x_3 x_5 + x_2 x_3 x_5 + x_1 x_2 x_3 x_5 + x_3 x_5 x_6 \\
			&\qquad\qquad +  x_1 x_3 x_5 x_6 + x_2 x_3 x_5 x_6 + x_1 x_2 x_3 x_5 x_6\\
			k=7:&\qquad 1110\star 10=x_5 + x_4 x_7 + x_1 x_4 x_7 + x_2 x_4 x_7 + x_1 x_2 x_4 x_7 + x_3 x_4 x_7 +  x_1 x_3 x_4 x_7\\
			&\qquad\qquad  + x_2 x_3 x_4 x_7 + x_1 x_2 x_3 x_4 x_7 + x_4 x_6 x_7+ x_1 x_4 x_6 x_7 +  x_2 x_4 x_6 x_7 \\
			&\qquad\qquad + x_1 x_2 x_4 x_6 x_7 + x_3 x_4 x_6 x_7 + x_1 x_3 x_4 x_6 x_7 +  x_2 x_3 x_4 x_6 x_7 + x_1 x_2 x_3 x_4 x_6 x_7.
		\end{align*}  
		More generally, for $k\geq 5$,
		\begin{align*}	
			k\text{ even}:&\qquad \stackrel{k-4\ \text{times}}{(11\ldots 1)} 0\star 01\\
			k\text{ odd}:&\qquad \stackrel{k-4\ \text{times}}{(11\ldots 1)} 0\star 10.
		\end{align*}
	\end{corollary}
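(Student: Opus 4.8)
The plan is to recognize each displayed function as a special case of the preceding theorem and to verify its hypothesis. For the four explicit functions one reads $(s,\epsilon_1,\dots,\epsilon_k)$ off the landscape symbol---for instance $1\star 01$ has $s=2$, $B_1=1$, $B_2=01$; $10\star 10$ has $s=3$, $B_1=B_2=10$; $110\star 01$ has $s=4$, $B_1=110$, $B_2=01$; $1110\star 10$ has $s=5$, $B_1=1110$, $B_2=10$---and the stated polynomial is exactly $x_s+\prod_{j\ne s}(x_j+\epsilon_j)$ expanded by the distributive law, a routine check. So the real task is to verify the overlap (non-interference) condition required by the theorem for these $B_1,B_2$ and then, uniformly, for the two families $B_1=\underbrace{1\cdots 1}_{k-4}0$ with $B_2=01$ ($k$ even) and $B_2=10$ ($k$ odd).

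I would phrase that condition combinatorially. Write $P=\epsilon_1\cdots\epsilon_{s-1}\,\star\,\epsilon_{s+1}\cdots\epsilon_k$ for the length-$k$ landscape, with the $\star$ at position $s$, so that the rule flips the bit under $\star$ exactly when its $k-1$ neighbours match $P$. Flipping a matched bit at position $m$ can change the match status only at positions $m+t$ for which $m$ lies in the frame of $m+t$, i.e.\ for $t\in\{-(k-s),\dots,-1\}\cup\{1,\dots,s-1\}$; and for each such $t$ it suffices to exhibit an index $i$ in the common range $[\max(1,1+t),\min(k,k+t)]$ with $i\ne s$ and $i\ne s+t$ (so neither frame reads a $\star$ there) and $P_i\ne P_{i-t}$, since then the two frames cannot both match for any input. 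This incompatibility, ranging over all relevant $t$, is what $ov(B_2B_1\cap s_\ell(B_2B_1))<\min\{\len(B_1),\len(B_2)\}$ records; once it holds, the landscape is conserved, the induced map $F$ is the composite of a fixed cyclic shift with an involution, hence a bijection on $\F_2^n$ for every $n\ge k$, which is the assertion.

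For the two families one has $s=k-2$, $\len(B_1)=k-3$, $\len(B_2)=2$, and
\[
P=\bigl(\underbrace{1,\dots,1}_{k-4},\,0,\,\star,\,0,\,1\bigr)\ (k\text{ even}),\qquad
P=\bigl(\underbrace{1,\dots,1}_{k-4},\,0,\,\star,\,1,\,0\bigr)\ (k\text{ odd}),
\]
and the relevant shifts are $t\in\{-2,-1\}\cup\{1,\dots,k-3\}$. The key observation is that the lone $0$ at position $k-3$ (the last symbol of $B_1$) together with the two-symbol tail $B_2$ act as markers inside an otherwise constant run of $1$'s, so that two frames can never agree at more than one non-$\star$ position. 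I would split the shifts as follows: small $t\in\{-2,-1,1,2\}$, where the witness comes from the tail or from the $0$ at position $k-3$ meeting a $1$ of the run (e.g.\ $i=k$ for $t=1$; $i=k-3$ for $t=2$, $k\ge 6$; $i=k-4$ for $t=-1$; $i=k-5$ for $t=-2$, $k\ge 6$); moderate $4\le t\le k-4$, where $P_{k-3}=0\ne 1=P_{k-3-t}$ provides $i=k-3$; and the extreme shifts $t=3$ and $t=k-3$, handled by comparing either the trailing symbol ($i=k$) or the entry $P_{k-1}$ against the appropriate element of the $1$-run, the choice depending on the parity of $k$. The listed cases $k=4,5,6,7$ are finite instances of the same computation; for $k=4$ the strings are so short that every aligned non-$\star$ position already carries a mismatch, which is what is needed there since $\min\{\len(B_1),\len(B_2)\}=1$.

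The main obstacle is the bookkeeping for the families: making the split over $t$ genuinely exhaustive and checking, in each subcase and uniformly in $k$ and in the parity of $k$, that the chosen witness index $i$ lies in the overlap range and avoids both $\star$-positions $s=k-2$ and $s+t$. Everything else---the polynomial expansions and the implication ``conserved landscape $\Rightarrow$ involution up to a shift $\Rightarrow$ bijection''---is mechanical.
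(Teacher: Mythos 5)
Your proposal is sound and follows the only route available: read off $(s,B_1,B_2)$ from each landscape symbol, confirm the polynomial expansion, and verify the non-self-overlap hypothesis of the preceding theorem. The paper gives no proof of this corollary at all, so everything of substance here is yours. In particular, your reformulation of the hypothesis --- for every admissible shift $t\in\{-(k-s),\dots,-1\}\cup\{1,\dots,s-1\}$ there is a witness index $i$ in the common window, distinct from both $\star$-positions, with $P_i\neq P_{i-t}$ --- together with the observation that such a witness position is never itself flipped (since no two frames can co-match, hence no matched position lies in the non-$\star$ part of another matched frame), is exactly what is needed to turn the theorem's one-line proof into an argument; and your reading of $F(x)_i=x_{i+s-1}+\prod_{j\neq s}(x_{i+j-1}+\epsilon_j)$ as a cyclic shift composed with an involution is the correct reason bijectivity follows for every $n\geq k$.

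Two details need repair. First, for the shift $t=3$ with $k$ odd neither of your proposed witnesses works: $P_k=0=P_{k-3}$ and $P_{k-1}=1=P_{k-4}$. But $i=k-3$ does work, since $P_{k-3}=0\neq 1=P_{k-3-t}$ for all $1\leq t\leq k-4$ (and $t=3$ does not arise for $k=5$), so your ``moderate'' witness already covers $3\leq t\leq k-4$ and the separate treatment of $t=3$ can be dropped, leaving only $t\in\{-2,-1,1,2\}$ and $t=k-3$ to handle by hand. Second, your aside on $k=4$ deserves to be said louder: with $B_1=1$, $B_2=01$ one has $\min\{\len(B_1),\len(B_2)\}=1$, while $B_2B_1=011$ shifted by one agrees with itself in one position, so $ov(B_2B_1\cap s_1(B_2B_1))=1$ and the theorem's hypothesis as literally written is \emph{not} satisfied by the first listed example. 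It is your witness condition (which does hold for $P=(1,\star,0,1)$ at every shift $t\in\{-2,-1,1\}$), and not the paper's $ov$ inequality, that certifies bijectivity there; so what your proof really establishes is the corollary under a corrected form of the theorem's hypothesis, and that correction should be made explicit rather than left implicit in the $k=4$ remark.
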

	
	\begin{remark}
		We  computed the nonlinearity of the generated lifting for small dimensions for all the examples above for $4\leq k\leq n\leq 8$, and obtained $N_F=2^{n-k+1}$. 
	\end{remark}
	
	\section{Affine equivalence and cyclic functions}\label{affine-section}
	
	The cycle of an element $x\in\F_2^n$ is the set $c(x) = \{ S^j(x) : j\in\Z \}$, whose size must divide $n$.
	Two cycles are either equal or disjoint, so the collection of cycles $X=\{ c(x) : x\in \F_2^n \}$ forms a partition of $\F_2^n$.
	A map $\F_2^n \to \F_2^n$ is called \emph{cyclic} if it restricts to a map $X\to X$, for every cycle $X$, that is, if the image of every cycle is contained in a cycle.
	
	\begin{lemma}
		If $F\colon \F_2^n\to\F_2^n$ is a function satisfying $FS=S^kF$ for some $1\leq k\leq n$, then $F$ is cyclic.
		
		Moreover, for every $1\leq j,\ell\leq n$, consider the set $X_{j,\ell}=\{ F\colon \F_2^n\to\F_2^n \mid FS^j=S^\ell F \}$.
		Then $X_{j,\ell}$ is a subset of the set of all cyclic functions if and only if $\gcd(j,n)=1$.
		
		If $\gcd(j,n)=1$, then there is some $1\leq k\leq n$ such that $X_{j,\ell}=X_{1,k}$.
	\end{lemma}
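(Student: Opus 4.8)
The plan is to prove the three assertions in order, noting that one direction of the middle ``if and only if'' falls out of the first and third together. For the opening claim: if $FS=S^{k}F$, an immediate induction gives $FS^{t}=S^{tk}F$ for all $t\ge 0$, so for every $x\in\F_2^n$ and every $t$ one has $F\bigl(S^{t}(x)\bigr)=S^{tk}\bigl(F(x)\bigr)\in c\bigl(F(x)\bigr)$; hence $F(c(x))\subseteq c(F(x))$, i.e.\ the image of each cycle lies inside a cycle, which is the definition of $F$ being cyclic.

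Next I would prove the third assertion and read off one direction of the second. Assume $\gcd(j,n)=1$ and choose $1\le m\le n$ with $jm\equiv 1\pmod n$. If $F\in X_{j,\ell}$, then by the same induction $FS^{jm}=S^{\ell m}F$, and since $S^{jm}=S$ this reads $FS=S^{k}F$ with $k\in\{1,\dots,n\}$ congruent to $\ell m$ modulo $n$; thus $F\in X_{1,k}$. Conversely, if $FS=S^{k}F$ then $FS^{j}=S^{kj}F$, and since $kj\equiv \ell mj\equiv\ell\pmod n$ we get $FS^{j}=S^{\ell}F$, i.e.\ $F\in X_{j,\ell}$. Hence $X_{j,\ell}=X_{1,k}$, and together with the first part this shows that $\gcd(j,n)=1$ forces every member of $X_{j,\ell}$ to be cyclic.

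The remaining and substantive point is the other direction of the second assertion: when $d:=\gcd(j,n)>1$, I would exhibit an explicit non-cyclic $F\in X_{j,\ell}$, exploiting that $S^{j}$ does not act transitively on a full-length cycle. Take $x=(1,0,\dots,0)$, whose $S$-cycle $C$ has length $n\ge 2$; inside $C$ every $S^{j}$-orbit has size $n/d$, so $C$ splits into $d\ge 2$ distinct $S^{j}$-orbits. On the $S^{j}$-orbit through $x$, set $F\bigl(S^{ij}(x)\bigr)=S^{i\ell}(0)=0$; on a second $S^{j}$-orbit in $C$, through some $x'$, set $F\bigl(S^{ij}(x')\bigr)=S^{i\ell}(1)=1$; on every remaining $S^{j}$-orbit, define $F$ via $FS^{j}=S^{\ell}F$ starting from the value $0$ at a chosen representative. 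Then $F\in X_{j,\ell}$, but $F(C)$ contains both $0=(0,\dots,0)$ and $1=(1,\dots,1)$, which lie in different cycles, so $F$ is not cyclic. The one place needing care, and what I expect to be the main obstacle, is the well-definedness of this $F$: the prescribed values must be consistent around each $S^{j}$-orbit, i.e.\ invariant under $S^{(n/d)\ell}$; here this is automatic because every chosen image is shift-invariant (more generally, any element of $S$-period dividing $(n/d)\ell$ would serve). Everything else is routine modular bookkeeping with the relation $FS^{j}=S^{\ell}F$.
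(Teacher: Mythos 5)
Your proof is correct and follows essentially the same route as the paper's: part one and part three are argued identically (the paper compresses your converse computation in the third part to ``computations show''), and for the failure of cyclicity when $\gcd(j,n)>1$ both arguments build an explicit $F\in X_{j,\ell}$ sending a single cycle onto $\{(0,\dots,0),(1,\dots,1)\}$, exploiting that shift-invariance of the target values makes the relation $FS^j=S^\ell F$ automatic. The only difference is the witness: the paper uses the short cycle of length $d=\gcd(j,n)$ generated by repeating the block $1,0,\dots,0$ of length $d$, on which $S^j$ acts trivially, and sets $F$ equal to $(1,\dots,1)$ at one point of it and $(0,\dots,0)$ elsewhere, whereas you split the full-length cycle of $(1,0,\dots,0)$ into its $d$ distinct $S^j$-orbits and assign constant values orbit by orbit --- both are valid.
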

	
	\begin{proof}
		Suppose that $FS=S^kF$ and let $x\in\F_2^n$.
		Then $S^{ki}F(x)=F(S^ix)$, for all $i$, that is, the image of the cycle of $x$ belongs to the cycle of $F(x)$.
		
		If $\gcd(j,n)>1$, then define the element $x$ to be the element formed by repeating the string $1,0\dotsc,0$ of length~$\gcd(j,n)$ a number of $n/\gcd(j,n)$ times.
		Define $F$ by taking $F(S^{i\gcd(j,n)}x)=(1,1,\dotsc,1)$ for $i\geq 0$, and $F(y)=(0,0,\dotsc,0)$ for all other $y\in\F_2^n$.
		Then $F$ satisfies $FS^j=S^\ell F$ for all $\ell$ and maps $c(x)$ to $\{(0,0,\dotsc,0),(1,1,\dotsc,1)\}$, so it is not cyclic.
		
		If $\gcd(j,n)=1$, then there exists $1\leq k\leq n$ such that $jk=\ell$,
		and computations show that $FS=S^kF$ if and only if $FS^j=S^\ell F$.
	\end{proof}

	Note that there exists a cyclic function $F$ and integers $j,\ell$ with $\gcd(j,n)>1$ such that $FS^j=S^\ell F$, but there is no $k$ such that $FS=S^kF$ holds (for this particular $F$), so the above result only applies to this class of functions. 

	\begin{definition}
		A function $F\colon \F_2^n\to\F_2^n$ is called $k$-shift-invariant if it satisfies $FS=S^kF$ for some $1\leq k\leq n$.
		If $k=1$, 
		then $F$ is  shift-invariant, as above.
	\end{definition}
	
	\begin{remark}
		We want to point out that the concept of $k$-rotation-symmetric S-boxes has been previously defined (see~\textup{\cite{Kavut12}}): a function $F\colon\F_2^n\to\F_2^m$ satisfying $S^kF=FS^k$ (for $k$ dividing $n$) is called $k$-rotation symmetric. However, as we point out below, for us it is more natural, given the connection with circulant matrices, to consider the above definition.
		If $\gcd(k,n)=1$ and $FS^k=S^kF$, let $m$ be so that $mk=1\pmod n$, and then $FS=FS^{mk}=S^{mk}F=SF$, so $F$ is shift-invariant.
		If $\gcd(k,n)>1$, then we can potentially be in a rather different setting, e.g., if
		\[
		A=\begin{pmatrix}
			a & b & c & d \\
			e & f & g & h \\
			c & d & a & b \\
			g & h & e & f
		\end{pmatrix},
		\]
		then $AS^2=S^2A$, but $A$ (or $A^T$) is not necessarily $k$-circulant for any $k$.
	\end{remark}

	The set of cyclic functions and the set of shift-invariant functions are both closed under composition, while a product of an $m$- and $k$-shift-invariant function is $mk$-shift-invariant. Thus the set of all functions that are $k$-shift-invariant for some $k$ is also closed under composition.
	
	\medskip
	
	We denote a $k$-circulant matrix by 
	\[
	C_k(a_1,\ldots,a_n)=
	\begin{pmatrix}
		a_1 & a_2 &\cdots&   a_n\\
		a_{n-k+1}& a_{n-k+2}& \cdots   & a_{n-k}\\
		a_{n-2k+1}& a_{n-2k+2}& \cdots   & a_{n-2k}\\
		\vdots & \vdots & \vdots   &\vdots \\
		a_{k+1} & a_{k+2} &\cdots & a_k
	\end{pmatrix}.
	\]
	\begin{lemma}
		Let $A$ be a matrix. 
		The following are equivalent:
		\begin{itemize}
			\item[(i)] the linear transformation $x\mapsto Ax$ is cyclic,
			\item[(ii)] there exists a vector $b\in\F_2^n$ and an integer $1\leq k\leq n$ such that
			\[
			\textup{column $i$ of $A$}=S^{(i-1)k}b \text{ for all } 1\leq i\leq n,
			\]
			that is $A^T$ is $k$-circulant. \vspace{-2pt}
			\item[(iii)] there is an integer $1\leq k\leq n$ such that $AS=S^kA$, that is, the linear transformation $x\mapsto Ax$ is $k$-shift-invariant.
		\end{itemize}
	\end{lemma}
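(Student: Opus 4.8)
The statement is an equivalence among three conditions on a matrix $A$ over $\F_2$ (or any field): (i) $x\mapsto Ax$ is cyclic, (ii) $A^T$ is $k$-circulant for some $k$, (iii) $AS=S^kA$ for some $k$. The natural strategy is to prove a cycle of implications (iii)$\Rightarrow$(i), (iii)$\Leftrightarrow$(ii), and (i)$\Rightarrow$(iii), relying on the preceding lemma for one arrow. First I would note that (iii)$\Rightarrow$(i) is already contained in the first lemma of this section: any $F$ satisfying $FS=S^kF$ is cyclic, and linear maps are a special case. So the real content is (ii)$\Leftrightarrow$(iii) and (i)$\Rightarrow$(iii).

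For (iii)$\Leftrightarrow$(ii), the plan is a direct computation. Writing $A$ in terms of its columns $A=(\,c_1\mid c_2\mid\cdots\mid c_n\,)$, the right shift $S$ acts on columns by cyclic permutation: $AS$ has columns $(c_n,c_1,\dots,c_{n-1})$, while $S^kA$ has columns $(S^kc_1, S^kc_2,\dots, S^kc_n)$. Equating these column-by-column gives the recursion $c_{i}=S^k c_{i-1}$ (indices mod $n$), hence $c_i = S^{(i-1)k} c_1$ for all $i$, with $b:=c_1$. This is exactly the statement that $A^T$ is $k$-circulant in the sense of the displayed $C_k$-matrix notation introduced just above: the rows of $A^T$ are the columns of $A$, and successive columns are obtained by shifting by $k$ positions. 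Running this computation in reverse gives the converse, so (ii) and (iii) are literally the same condition rewritten.

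The one genuinely substantive step is (i)$\Rightarrow$(iii): from "sends every cycle into a cycle" we must extract a \emph{single} shift exponent $k$ that works uniformly. Here I would apply cyclicity to the standard basis vectors or, more efficiently, to the generator $e = (1,0,0,\dots,0)$ of the full cycle of length $n$ (assuming for the moment $\gcd$-issues are handled). Since $Ae$ must lie in a cycle, and since cyclicity applied to $e, Se, S^2e,\dots$ forces $A(S^i e)$ to be the successive shifts of $A e$ within that cycle, we get $A S^i e = S^{k i} A e$ for the appropriate $k$ determined by how the map acts on that one cycle. The subtlety is that a priori different cycles could demand different shift exponents; the key observation is that the action of $S$ on the cycle of $e$ has order $n$, so the induced action of $A$ on its image cycle must also have an order dividing $n$, and matching a single element $Ae \mapsto A(Se)$ already pins down $k$ mod (the length of that cycle); linearity and the fact that $\{S^i e\}$ together with $0$ does \emph{not} span forces one to instead argue on a spanning set. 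So the cleaner route is: apply cyclicity to \emph{each} basis vector $e_j$, obtaining per-vector shift data, then use linearity plus the compatibility of the cycle structure to show all these local exponents coincide; once $AS e_j = S^k A e_j$ holds for every $j$, linearity gives $AS = S^kA$. The main obstacle is precisely this uniformization of $k$ across all cycles, and I expect the argument to hinge on choosing a cyclic vector (an element whose cycle has full length $n$, e.g. $(1,0,\dots,0)$) and noting that $S$ restricted to that cycle is a single $n$-cycle, so any linear map respecting the partition into cycles and commuting-up-to-shift on that one orbit is forced into the form $AS=S^kA$ globally by linear extension — the remaining check that this global identity is consistent with the action on \emph{all other} cycles is then automatic since $AS=S^kA$ is a linear identity and holds on a spanning set.
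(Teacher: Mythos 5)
Your overall architecture matches the paper's: (iii)$\Rightarrow$(i) via $Ac(x)=ASc(x)=S^kAc(x)$ (or, as you do, by specializing the section's first lemma to linear maps), (ii)$\Leftrightarrow$(iii) by a direct column computation (the paper outsources this step to a standard result on circulant matrices), and the remaining implication by applying cyclicity to the cycle $\{e_1,\dotsc,e_n\}$ of standard basis vectors. Two small points on the middle step: with $Se_i=e_{i+1}$ the columns of $AS$ are $(c_2,\dotsc,c_n,c_1)$, not $(c_n,c_1,\dotsc,c_{n-1})$ as you wrote, although the recursion $c_{i+1}=S^kc_i$ you then extract is the correct one; and your parenthetical claim that $\{S^ie\}$ does \emph{not} span is false --- the cycle of $e_1$ is exactly the standard basis.

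The genuine problem is the implication out of (i). Cyclicity applied to the basis cycle only tells you that every column lies in the cycle of $b=Ae_1$, i.e.\ $Ae_i=S^{j_i}b$ for some exponents $j_i$; to reach (ii) or (iii) you must show that the $j_i$ form an arithmetic progression $j_i=(i-1)k$. This is precisely the difficulty you name (``a priori different shift exponents'') and then dismiss as ``automatic by linear extension'', which is circular: extending $AS=S^kA$ linearly from a spanning set presupposes that $ASe_i=S^kAe_i$ holds on that spanning set, and that is exactly the unproved claim. A family of columns can exhaust a cycle without being an arithmetic progression of shifts (e.g.\ $b,Sb,S^3b,S^2b,\dotsc$), so ruling this out needs input from cyclicity on \emph{other} cycles --- for instance the cycle of $e_1+e_2$, whose images $A(e_i+e_{i+1})=S^{j_i}\bigl(b+S^{j_{i+1}-j_i}b\bigr)$ can only all lie in one cycle if the vectors $b+S^{j_{i+1}-j_i}b$ are shifts of one another, which constrains the differences $j_{i+1}-j_i$ --- or some equivalent device. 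In fairness, the paper's own proof is equally terse at this exact spot (it asserts that since the columns ``make up a cycle'', (ii) holds), so you have not overlooked an idea the paper supplies; but your write-up makes the gap conspicuous by raising the issue and then not closing it.
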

	
	We say that the matrix $A$ is cyclic if any of these equivalent conditions hold.
	
	\begin{proof}
		The equivalence between (ii) and (iii) follows from \cite[Theorem~5.1.1]{Davis79}.
		
		Suppose that $x\mapsto Ax$ is cyclic.
		Then the image of the cycle $\{e_1,\dotsc,e_n\}$ must be a cycle.
		The image $Ae_1,\dotsc,Ae_n$, that is, the columns of $A$, therefore make up a cycle, and thus (ii) holds.
		
		Suppose (iii) holds, i.e., $AS=S^kA$ for some $k$, and let $x\in\F_2^n$.
		Then, since $Sc(x)=c(x)$, we have $Ac(x)=ASc(x)=S^kAc(x)$, that is, $S^k$ maps the set $Ac(x)$ to $Ac(x)$, and thus $Ac(x)$ must be contained in a cycle.
	\end{proof}
	
	An affine transformation $x\mapsto Ax+c$ is $k$-shift-invariant if and only if $x\mapsto Ax$ is $k$-shift-invariant and $S^kc=c$.
	Indeed, assuming $FS=S^kF$, and setting $x=0$, gives that $S^kc=c$, and then the $k$-shift-invariance of the linear part follows.
	
	\medskip
	
	If $AS=S^kA$ with $\gcd(k,n)=1$, then there exists a unique $1\leq m\leq n$ such that $SA=AS^m$. Indeed, given such a $k$, there is a unique $m$ such that $mk=1$ and then $SA=S^{mk}A=AS^m$. In particular, by using that $S^T=S^{-1}$, this means that $A^TS=S^mA^T$, so the transpose of $A$ also induces a cyclic linear transformation $x\to A^Tx$.
	
	Moreover, if $F$ is $k$-shift-invariant and $\gcd(k,n)=1$, then the image under $F$ of a cycle is always equal to a cycle (not just contained in one), and for a $k$-shift-invariant function to be bijective it is necessary that $\gcd(k,n)=1$, otherwise the elements $e_1$ takes the same value as $e_i$ for some $i>1$, where $e_i$ denotes the vector with $1$ on position $i$, and $0$ else. Indeed, $F(e_2)=S^kF(e_1)$, $F(e_3)=S^kF(e_2)=S^{2k}F(e_1)$, etc.
	
	\begin{lemma}\label{gcd-k-n-1}
		Let $F$ be $k$-shift-invariant and $\gcd(k,n)=1$.
		Then $F$ is completely determined by a single Boolean function $f$ by
		\[
		F=(f,f\circ S^{-m},f\circ S^{-2m},\dotsc,f\circ S^{-(n-1)m}),
		\]
		where $m$ is the inverse of $k$ modulo~$n$.
	\end{lemma}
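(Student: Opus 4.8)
\textit{Proof proposal.} The plan is to unwind the defining relation $FS=S^kF$ into a system of relations among the coordinate functions of $F$, and then solve that system using $\gcd(k,n)=1$. Write $F=(f_1,\dots,f_n)$ with $F(x)_i=f_i(x)$, all subscripts read modulo $n$. First I would record the translation of $k$-shift-invariance into coordinates: comparing the $i$-th entries of $F(Sx)$ and $S^kF(x)$ — the former is $f_i(Sx)=(f_i\circ S)(x)$, and, since $S$ lowers indices by one, the latter is $F(x)_{i-k}=f_{i-k}(x)$ — gives $f_i\circ S=f_{i-k}$ for every $i$, equivalently $f_{i+k}=f_i\circ S^{-1}$.

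Next, iterating this last identity yields $f_{1+jk}=f_1\circ S^{-j}$ for all $j\in\Z$ (using that $S$ is invertible). Because $\gcd(k,n)=1$, the map $j\mapsto 1+jk \pmod n$ is a bijection of $\Z/n\Z$, so as $j$ runs over $0,1,\dots,n-1$ the index $1+jk$ runs through all of $\{1,\dots,n\}$; hence every coordinate function $f_i$ is determined by $f:=f_1$. Explicitly, given $i$, the congruence $1+jk\equiv i\pmod n$ has the unique solution $j\equiv(i-1)m\pmod n$, where $m$ is the inverse of $k$ modulo $n$; substituting gives $f_i=f\circ S^{-(i-1)m}$, which is exactly the claimed formula $F=(f,f\circ S^{-m},f\circ S^{-2m},\dots,f\circ S^{-(n-1)m})$.

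There is no real obstacle here beyond careful bookkeeping of subscripts modulo $n$ and the direction of the shift: a sign slip in passing from $f_i\circ S=f_{i-k}$ to $f_{i+k}=f_i\circ S^{-1}$ would propagate into the final exponent $-(i-1)m$. I would also note in passing that $f=f_1$ is canonically the first coordinate of $F$, so the function $f$ is not merely determined but uniquely singled out; conversely, any Boolean function $f$ produces via the displayed formula a $k$-shift-invariant $F$ (this is the special case of the general fact, recalled in Section~\ref{sec3}, that $f_i\circ S^j=f_{i-\ell}$ characterizes $F\circ S^j=S^\ell\circ F$), so the correspondence is in fact a bijection — though only the ``determined by'' direction is needed for the statement, and for $k=1$ one recovers the formula stated earlier for rotation-symmetric S-boxes.
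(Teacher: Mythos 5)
Your proposal is correct and follows essentially the same route as the paper: translate $FS=S^kF$ into the coordinate identity $f_i\circ S=f_{i-k}$, iterate to get $f_{1+jk}=f_1\circ S^{-j}$, and invert $k$ modulo $n$ to read off $f_i=f\circ S^{-(i-1)m}$. The extra remarks on uniqueness and the converse are fine but not needed for the statement.
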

	
	\begin{proof}
		As usual, let $f_i$ denote the coordinate functions of $F$ so that $f_1=f$. Then the identity $f_i\circ S=f_{i-k\pmod n}$ leads to
		\[
		f_{\ell k+1}\circ S^\ell = f_{(\ell-1)k+1}\circ S^{\ell-1} = \dotsb f_{k+1}\circ S^{-1} = f_1,
		\]
		for $0\leq\ell\leq n-1$, so $f_{\ell k+1}= f\circ S^{-\ell}$. Now set $i=\ell k+1\pmod n$, so that $\ell=m(i-1)\pmod n$, where $m$ is the inverse of $k$, and then we get that $f_i=f\circ S^{-m(i-1)}$.
	\end{proof}
	
	In general, two functions $F,G\colon\F_2^n\to\F_2^n$ are said to be affine equivalent if there exist invertible matrices $A,B\colon\F_2^n\to\F_2^n$ and elements $d,e\in\F_2^n$ such that
	\[
	F(Ax+e)=BG(x)+d.
	\]
	
	\begin{lemma}
		Let $F,G$ be two functions $\F_2^n\to\F_2^n$ with $F(Ax+e)=BG(x)+d$ and assume that $A$ and $B$ are invertible and satisfy $AS=S^kA$, $BS=S^kB$ for some $1\leq k\leq n$.
		Assume also that $d,e$ belong to $\{(0,0,\dotsc 0),(1,1,\dotsc,1)\}$.
		Then
		\begin{itemize}
			\item[(i)] For every $1\leq m\leq n$, $F$ is $m$-shift-invariant if and only if $G$ is $m$-shift-invariant,
			\item[(ii)] $F$ is cyclic if and only if $G$ is cyclic,
			\item[(iii)] $F$ is bijective if and only if $G$ is bijective.
		\end{itemize}
	\end{lemma}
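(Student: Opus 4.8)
The statement concerns a relation $F(Ax+e)=BG(x)+d$ with $A,B$ invertible and $k$-shift-invariant in the sense $AS=S^kA$, $BS=S^kB$, and $d,e\in\{(0,\dots,0),(1,\dots,1)\}$. The strategy is to exploit that these hypotheses are exactly what is needed to conjugate the shift through the affine change of coordinates. First I would record the two basic facts: an affine map $x\mapsto Ax+e$ with $AS=S^kA$ and $S^ke=e$ is $k$-shift-invariant as a map $\F_2^n\to\F_2^n$ (this is the observation already made in the excerpt, using $A(Sx+0)$ and $x=0$); and since $d,e\in\{0,\mathbf 1\}$ and $S\mathbf 1=\mathbf 1$, $S\mathbf 0=\mathbf 0$, we automatically have $S^ke=e$ and $S^kd=d$, so both affine maps $\alpha\colon x\mapsto Ax+e$ and $\beta\colon x\mapsto Bx+d$ are genuinely $k$-shift-invariant. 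Because $\gcd$ considerations may matter, I note that if $\gcd(k,n)>1$ none of $A,B$ can be bijective (by the $e_1,\dots,e_n$ argument recalled just before Lemma~\ref{gcd-k-n-1}), so the invertibility hypothesis forces $\gcd(k,n)=1$, and in particular $A,B$ are cyclic in the sense of the preceding lemma.

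**Part (i).** Rewrite the hypothesis as $F\circ\alpha=\beta\circ G$, i.e. $F=\beta\circ G\circ\alpha^{-1}$. Here $\alpha^{-1}$ is again affine and, being the inverse of a $k$-shift-invariant bijection with $\gcd(k,n)=1$, satisfies $\alpha^{-1}S=S^{m}\alpha^{-1}$ where $m$ is the inverse of $k$ mod $n$ (this is the $SA=AS^m$ computation in the excerpt). Now suppose $G$ is $\ell$-shift-invariant, $GS=S^\ell G$. Then
\[
FS=\beta G\alpha^{-1}S=\beta G S^m \alpha^{-1}=\beta S^{\ell m}G\alpha^{-1}=S^{k\ell m}\beta G\alpha^{-1}=S^{\ell}F,
\]
using $\beta S^j=S^{kj}\beta$ on each factor and $km\equiv 1\pmod n$. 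The converse is symmetric (solve for $G$: $G=\beta^{-1}F\alpha$, and $\beta^{-1}$ satisfies $\beta^{-1}S=S^m\beta^{-1}$). Hence $F$ is $\ell$-shift-invariant iff $G$ is, which is (i). Part (i) with $\ell=1$ is the ``shift-invariant'' case; but note the statement as phrased ranges over all $1\le m\le n$, so one should be slightly careful when $\gcd(m,n)>1$: there $m$-shift-invariance is a meaningful property too, and the computation above still goes through verbatim since it only used $\gcd(k,n)=1$, not $\gcd(\ell,n)=1$.

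**Parts (ii) and (iii).** For (iii), $F=\beta\circ G\circ\alpha^{-1}$ is a composition of $G$ with two bijections $\alpha^{-1},\beta$, so $F$ is a bijection iff $G$ is; this is immediate and needs none of the shift structure. For (ii), I would combine (i) with the earlier lemma characterizing cyclic functions: a function satisfying $HS=S^jH$ for some $1\le j\le n$ is cyclic. If $G$ is cyclic, I want to conclude $F$ is cyclic. The cleanest route: the first lemma of the section says that $X_{j,\ell}=\{H: HS^j=S^\ell H\}$ consists of cyclic functions precisely when $\gcd(j,n)=1$, and conversely any cyclic function arising from a bijective hypothesis will lie in such a class — but a general cyclic $G$ need not be $j$-shift-invariant for any $j$. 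So instead I argue directly on cycles: $\alpha$ and $\beta$ are bijections that map cycles to cycles (being $k$-shift-invariant with $\gcd(k,n)=1$, by the lemma on cyclic matrices applied to their linear parts, plus the fact that adding the constant $\mathbf 0$ or $\mathbf 1$ fixes every cycle setwise — indeed $\mathbf 1$ added to a cycle $c(x)$ gives $c(x+\mathbf 1)$, again a cycle). Hence $\alpha,\beta$ and their inverses are \emph{cyclic bijections}, and cyclic functions are closed under composition (stated in the excerpt). Therefore $F=\beta\circ G\circ\alpha^{-1}$ is cyclic whenever $G$ is, and conversely.

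**Main obstacle.** The routine parts (iii) and the forward computations in (i) are mechanical. The one place requiring care is keeping the bookkeeping of exponents straight when passing $S$ through $\alpha^{-1}$ versus through $\beta$ — the former introduces $S^m$ with $m=k^{-1}\bmod n$, the latter introduces a factor of $k$ — and checking that these cancel correctly, which they do because $\alpha$ and $\beta$ have the \emph{same} $k$. The subtler structural point is (ii): one must not try to deduce cyclicity of $F$ from some single $j$-shift-invariance (which may fail for a general cyclic $G$), but rather argue that $\alpha,\beta$ are cyclic as bijections and invoke closure of cyclic functions under composition; verifying that the constant translation by $\mathbf 1$ preserves the cycle partition is the small fact that makes this work.
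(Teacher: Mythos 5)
Your proof is correct and follows essentially the same route as the paper's: the paper verifies (i) by the direct chain $S^mG(x)+B^{-1}d=S^mB^{-1}F(Ax+e)=B^{-1}F(ASx+e)=G(Sx)+B^{-1}d$, which is just your conjugation $F=\beta\circ G\circ\alpha^{-1}$ unwound (the paper's arrangement avoids ever needing $k^{-1}\bmod n$, whereas you introduce it after correctly noting that invertibility of $A$ forces $\gcd(k,n)=1$), and for (ii)--(iii) the paper simply says these ``follow by the composition of functions,'' which is exactly the closure argument you spell out.
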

	
	\begin{proof}
		If $F$ is $m$-shift-invariant, then
		\[
		\begin{split}
			S^mG(x) + B^{-1}d &= S^mB^{-1}F(Ax+e) = B^{-1}S^{km}F(Ax+e) \\
			&= B^{-1}F(S^kAx+e) = B^{-1}F(ASx+e) \\
			&= G(Sx) + B^{-1}d,
		\end{split}
		\]
		so $G$ is $m$-shift-invariant,
		and the same argument applies in the opposite direction.
		The cyclic and the permutation properties follow by the composition of functions.
	\end{proof}
	
	In light of the above we introduce the following notion. 
	\begin{definition}
		Two functions $F,G\colon\F_2^n\to\F_2^n$ are said to be cyclically equivalent if there exist invertible cyclic matrices $A,B\colon\F_2^n\to\F_2^n$ and elements $d,e\in\{(0,0,\dotsc 0),(1,1,\dotsc,1)\}$ such that
		\[
		F(Ax+e)=BG(x)+d.
		\]
	\end{definition}
	
	Let $F$ and $G$ be two cyclically equivalent bijections, and let $A$ and $B$ be the two implementing matrices.
	Suppose that $F,G,A,B$ have degrees $j,k,\ell,m$, respectively. 
	Then 
	\[
	S^{j\ell}F(Ax+e) = F(ASx+e)=BG(Sx)+d = S^{km}BG(x)+d=S^{km}F(Ax+e),
	\]
	so $j\ell=km \pmod{n}$, and in particular, $F$ and $G$ have the same degree if and only if $A$ and $B$ have the same degree.
	
	\begin{lemma}
		If $F$ and $G$ are two affine equivalent cyclic bijections, and $F$ is affine, then $F$ and $G$ are cyclically equivalent.
	\end{lemma}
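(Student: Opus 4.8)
The plan is to exploit the rigidity of affine cyclic bijections and reduce the statement to elementary algebra over $\F_2$. Throughout, write $\mathbf 0=(0,\dotsc,0)$ and $\mathbf 1=(1,\dotsc,1)$ for the two fixed points of $S$, which are exactly the two cycles of length $1$.

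\emph{Step 1 (normal form for $F$).} Since $F$ is affine, write $F(x)=Mx+c$ with $M$ an invertible matrix and $c\in\F_2^n$. A cyclic bijection preserves cycle lengths, and the only length-$1$ cycles are $\{\mathbf 0\}$ and $\{\mathbf 1\}$, so $F$ permutes $\{\mathbf 0,\mathbf 1\}$; hence $c=F(\mathbf 0)\in\{\mathbf 0,\mathbf 1\}$ and $M\mathbf 1=F(\mathbf 1)+c\in\{\mathbf 0,\mathbf 1\}$, which forces $M\mathbf 1=\mathbf 1$ because $M$ is linear and invertible. In particular $Mv=v$ for every $v\in\{\mathbf 0,\mathbf 1\}$. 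Moreover $M=\sigma\circ F$, where $\sigma$ is the translation by $c$ — either the identity or the complementation map $\sigma_1$ — and both of these commute with $S$, hence are cyclic; since the set of cyclic functions is closed under composition, $M$ is a cyclic linear transformation, i.e.\ (by the characterization lemma) an invertible cyclic matrix.

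\emph{Step 2 ($G$ is affine and normal-formed).} From $F(Ax+e)=BG(x)+d$ and $F(x)=Mx+c$ we solve for $G$, getting $G(x)=B^{-1}MA\,x+B^{-1}(Me+c+d)$. Thus $G$ is affine, say $G(x)=Nx+c'$ with $N=B^{-1}MA$ invertible. Applying Step 1 verbatim to the cyclic bijection $G$ yields $c'\in\{\mathbf 0,\mathbf 1\}$ and that $N$ is an invertible cyclic matrix.

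\emph{Step 3 (producing the cyclic equivalence).} Set $A'=N$, $B'=M$, $e'=\mathbf 0$, and $d'=c+c'$. Then $A',B'$ are invertible cyclic matrices, and $e',d'\in\{\mathbf 0,\mathbf 1\}$, the latter because $\{\mathbf 0,\mathbf 1\}$ is a subgroup of $(\F_2^n,+)$. Now
\[
F(A'x+e')=M(Nx)+c=MNx+c,
\]
while, using $Mc'=c'$ from Step 1,
\[
B'G(x)+d'=M(Nx+c')+c+c'=MNx+Mc'+c+c'=MNx+c,
\]
so $F(A'x+e')=B'G(x)+d'$, i.e.\ $F$ and $G$ are cyclically equivalent.

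The only points that require care are in Steps 1–2: that the linear part of an affine cyclic bijection is again an \emph{invertible cyclic matrix} and that it fixes $\mathbf 1$; both rest on the cyclic and bijective hypotheses together with the earlier characterization of cyclic matrices, and everything else is bookkeeping in characteristic $2$. (Alternatively one could argue that every affine cyclic bijection is cyclically equivalent to the identity and invoke transitivity of cyclic equivalence, but the explicit construction above sidesteps checking transitivity and closure of cyclic matrices under inversion.)
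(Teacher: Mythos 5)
Your proof is correct and takes essentially the same approach as the paper's: both reduce $F$ and $G$ to the normal form (invertible cyclic matrix) plus (constant in $\{\mathbf{0},\mathbf{1}\}$) and then use the two linear parts as the intertwining cyclic matrices. The only cosmetic difference is your choice $e'=\mathbf{0}$, $d'=c+c'$ where the paper simply takes $Ax+e=G(x)$ and $Bx+d=F(x)$; your justification of the normal form (via the action on the two length-$1$ cycles and closure of cyclic maps under composition) is, if anything, slightly more careful than the paper's.
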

	
	\begin{proof}
		If $F$ is affine and cyclic, then $F(x)=Dx+b$ for some $k$-circulant matrix $D^T$ with $\gcd(k,n)=1$ and some $b\in\{(0,0,\dotsc 0),(1,1,\dotsc,1)\}$. Indeed, evaluating $S^kF$ and $FS$ at $0$ gives that $S^k b=b$, and it follows that $SD=DS$. Since $G$ is affine equivalent to $F$, then $G$ is also affine, and therefore of the same type. Now just pick $Ax+e$ to be $G(x)$ and $Bx+d$ to be $F(x)$.
	\end{proof}
	
	\begin{question}\label{affine-vs-cyclic}
		Clearly, if two functions are cyclically equivalent, then they are affine equivalent. Does the converse hold in the $k$-shift-invariant case?
		That is, $F$ and $G$ are $k$-shift-invariant and affine equivalent, are they necessarily cyclically equivalent? If not, can we produce a counter-example?
		
		Experimenting with $k$-shift-invariant maps in small dimensions suggests that affine equivalence may imply cyclically equivalence in this case, but finding a proof in general seems hard. Also, even though no counter-example was discovered, the possible testing is too limited to make any guesses.
	\end{question}
	
	\begin{remark}
		The statement given in \cite[Theorem~10]{Kavut12} is somewhat unprecise. What the proof and preceding results in the section show is the following: The number of RSSBs which are \emph{cyclically} equivalent to any particular bijective RSSB of dimension~$n$ is \emph{at most} $4N^2n\varphi(n)$. Indeed, the results in \cite[Section~5.1]{Kavut12} only deals with cyclic equivalence, i.e., intertwining with $k$-circulant matrices. Moreover, the cyclic equivalence class of $I$, which coincides with the affine equivalence class of $I$, as explained in Lemma~\ref{class-of-I}, has strictly fewer elements than $4N^2n\varphi(n)$, see Remark~\ref{A003473} below. Note that $N$ and thus the whole expression depends only on $n$.
		
		The result \cite[Theorem~10]{Kavut12} is cited and used in several places, e.g., on \cite[p.~58]{MPJL19} and on \cite[p.~362]{Carlet-book}.
	\end{remark}
	
	

	\begin{lemma}\label{class-of-I}
		The number of $k$-shift-invariant functions in the affine equivalence class of the identity matrix $I$ is two times the number of invertible $k$-circulant matrices of dimension~$n$.
	\end{lemma}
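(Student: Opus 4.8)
The plan is to make the affine equivalence class of $I$ completely explicit and then impose $k$-shift-invariance on its members. First I would observe that this class consists of exactly the affine bijections of $\F_2^n$: since $I$ is bijective and affine equivalence is closed under composition of bijections, every $F$ in the class is an affine bijection; conversely, for any invertible matrix $M$ and any $c\in\F_2^n$, the map $x\mapsto Mx+c$ is affine equivalent to $I$, as witnessed by taking $A=I$, $e=0$, $B=M$, $d=c$ in the defining relation $F(Ax+e)=BIx+d$. Moreover $(M,c)\mapsto(x\mapsto Mx+c)$ is a bijection from the set of pairs (invertible $M$, vector $c\in\F_2^n$) onto this class, since $Mx+c=M'x+c'$ for all $x$ forces $c=c'$ and then $M=M'$.

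Next I would use the fact, recorded immediately before the statement, that an affine map $x\mapsto Mx+c$ is $k$-shift-invariant precisely when $MS=S^kM$ and $S^kc=c$. Consequently the $k$-shift-invariant members of the class correspond bijectively to pairs $(M,c)$ with $M$ invertible satisfying $MS=S^kM$ and $c\in\F_2^n$ satisfying $S^kc=c$, and the count we want is the product of the number of such $M$ and the number of such $c$.

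For the $M$-factor I would invoke the equivalence of conditions (ii) and (iii) in the preceding lemma characterising cyclic matrices: $MS=S^kM$ holds if and only if $M^T$ is $k$-circulant, and $M\mapsto M^T$ is an invertibility-preserving bijection. Hence the number of admissible $M$ equals the number $N$ of invertible $k$-circulant matrices of dimension $n$. For the $c$-factor I would split on $\gcd(k,n)$. If $\gcd(k,n)>1$, then (as observed earlier in this section) there is no invertible $k$-shift-invariant linear map, so $N=0$ and the total is $0=2N$. If $\gcd(k,n)=1$, then $S^k$ generates the same cyclic subgroup as $S$, so $S^kc=c$ is equivalent to $Sc=c$, whose only solutions are the two constant strings $(0,\dots,0)$ and $(1,\dots,1)$. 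Multiplying, the number of $k$-shift-invariant functions in the affine class of $I$ is $2N$, as claimed.

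I do not anticipate a real obstacle: once the affine class of $I$ is described explicitly and $k$-shift-invariance is split into a linear condition on $M$ and a translation condition on $c$, the statement is pure bookkeeping. The only points needing mild care are checking that the linear part $M$ genuinely ranges over \emph{all} invertible matrices (rather than some a priori smaller set) and correctly disposing of the degenerate case $\gcd(k,n)>1$, in which both sides of the asserted identity vanish.
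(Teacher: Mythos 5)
Your proof is correct and follows essentially the same route as the paper's: identify the affine equivalence class of $I$ with the set of affine bijections $x\mapsto Mx+c$, observe that $k$-shift-invariance forces the transpose of the linear part to be $k$-circulant and $c$ to be one of the two constant vectors, and multiply the two counts. You are somewhat more explicit than the paper about the degenerate case $\gcd(k,n)>1$, where both sides of the identity vanish; the paper's proof tacitly assumes the nondegenerate situation.
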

	
	\begin{proof}
		Suppose that $I(Ax+e)=BG(x)+d$ for all $x$, i.e., $G(x)=B^{-1}Ax+c$ for some $c$ and all $x$. Setting $x=0$, this gives that $G(0)=c$, so $c=(0,0,\dotsc,0)$ or $(1,1,\dotsc,1)$. This again implies that the transpose of $B^{-1}A$ is $k$-circulant.
		
		The factor $2$ comes from the two choices for $c$, and the result follows.
	\end{proof}
	
	The union of sets of all invertible $k$-circulant matrices forms a group under matrix multiplication and inverses, containing the set of invertible circulant matrices as a commutative subgroup.
	
	If $A$ is $k$-circulant and invertible, then $\gcd(k,n)=1$, the cycle of the first row has maximal length $n$, and the Hamming weight of the first row is odd. If $\gcd(k,n)=1$, then the number of invertible $k$-circulant matrices is the same as the number of circulant matrices. If $n$ is a power of two, then the circulant matrices are precisely those coming from a first row with odd Hamming weight, i.e., the number is $\sum_\text{odd $i<n$} \binom{n}{i}=2^{n-1}$. 
	
	\medskip
	
	More generally, we can exactly characterize invertibility of circulant matrices.  The following result can be found, for instance, in \cite[Theorem 2.2]{BCMM01}, or \cite{WD07}, although the result
	appears much earlier in~\cite{Ing56}.
	\begin{theorem}
		\label{thm:genInv}
		Let $C$ be a (binary) circulant $n\times n$ matrix whose first row is $(a_1,\ldots,a_n)$, and $F(z)=a_1+a_2z+\cdots +a_n z^{n-1}\in\F_2[z]$ be its generating polynomial. Then $C$ is invertible if and only if $\gcd(F(z),z^n-1)=1$. Moreover,  the  first row $(\alpha_1,\ldots,\alpha_n)$ of the inverse $C^{-1}$ is a solution of
		\[
		(\alpha_1,\ldots,\alpha_n)\cdot C=(1,0,\ldots, 0),
		\]
		and furthermore, if $F^*(z)=\alpha_1+\alpha_2z+\cdots +\alpha_n z^{n-1}$ is the generating polynomial of $C^{-1}$, then $F(z)\cdot F^*(z)\equiv 1\pmod {z^n-1}$.
	\end{theorem}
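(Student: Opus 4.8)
The plan is to identify the algebra of $n\times n$ binary circulant matrices with the quotient ring $R=\F_2[z]/(z^n-1)$ and then read off the whole statement from that identification. First I would fix the basic cyclic shift matrix $P$ (the circulant matrix with first row $(0,1,0,\dots,0)$), observe $P^n=I$, and check that an arbitrary circulant matrix $C$ with first row $(a_1,\dots,a_n)$ equals $F(P)=a_1I+a_2P+\cdots+a_nP^{n-1}$. Since $I,P,\dots,P^{n-1}$ are linearly independent over $\F_2$ and commute, the assignment $z\mapsto P$ induces a ring isomorphism $\varphi\colon R\to\{\text{circulant }n\times n\text{ matrices over }\F_2\}$ sending the class of $F(z)$ to $C$; in particular the matrix product of two circulants corresponds to the product of their generating polynomials modulo $z^n-1$. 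The only bookkeeping point here is to make the index convention in the statement match $C=\varphi(F)$ (rather than a transpose or reciprocal), which I would verify once and for all; since transposition preserves invertibility, a convention slip is in any case harmless for the first assertion.

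The second step is to translate invertibility: $C$ is an invertible matrix iff the class $f$ of $F(z)$ is a unit in $R$. Then I would prove that $f$ is a unit iff $\gcd(F(z),z^n-1)=1$. If $d(z):=\gcd(F(z),z^n-1)=1$, B\'ezout in the principal ideal domain $\F_2[z]$ yields $u(z)F(z)+v(z)(z^n-1)=1$, so $u(z)F(z)\equiv 1\pmod{z^n-1}$ and the class of $u(z)$ is the inverse of $f$. Conversely, if $F(z)G(z)\equiv 1\pmod{z^n-1}$ for some $G(z)$, then $d(z)$ divides $F(z)G(z)$ and divides $z^n-1$, hence divides $F(z)G(z)-1$, so $d(z)\mid 1$. (Note that $R$ has zero divisors in general, but this plays no role, since the argument uses only that $\F_2[z]$ is a PID.)

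For the \emph{moreover} part, write $f\in R$ for the class of $F(z)$; since $C=\varphi(f)$ is invertible, $f$ is a unit, and $\varphi(f^{-1})$ is then a circulant matrix with $\varphi(f^{-1})\,C=\varphi(f^{-1}f)=I$, so by uniqueness of inverses $C^{-1}=\varphi(f^{-1})$ is itself circulant. Representing $f^{-1}$ by its unique polynomial $F^*(z)=\alpha_1+\alpha_2z+\cdots+\alpha_nz^{n-1}$ of degree $<n$, the first row of $C^{-1}$ is precisely $(\alpha_1,\dots,\alpha_n)$, and $F(z)F^*(z)\equiv 1\pmod{z^n-1}$ holds by the definition of $f^{-1}$. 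Finally, the displayed linear equation $(\alpha_1,\dots,\alpha_n)\cdot C=(1,0,\dots,0)$ is just the first row of the identity $C^{-1}C=I$, and it determines $(\alpha_1,\dots,\alpha_n)$ uniquely because $C$ is invertible.

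The only genuinely nontrivial ingredient is setting up the isomorphism $\varphi$ cleanly — well-definedness, multiplicativity, and the consequent closure of circulants under inversion — and after that everything is formal. I expect the main obstacle to be purely notational: keeping the indexing of rows, columns, and the coefficients $a_i$ consistent so that the generating polynomial in the statement is literally $\varphi^{-1}(C)$. Alternatively, one can bypass $\varphi$ altogether and argue directly, interpreting the coordinates of the row vector $(\alpha_1,\dots,\alpha_n)\cdot C$ as the coefficients of $F^*(z)F(z)\bmod (z^n-1)$, which reduces solvability of $(\alpha_1,\dots,\alpha_n)\cdot C=(1,0,\dots,0)$ to exactly the same B\'ezout/divisibility dichotomy as above.
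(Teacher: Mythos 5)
The paper itself gives no proof of this theorem: it is quoted from the literature (Bini--Del Corso--Manzini--Margara, Wang--Dong, and ultimately Ingleton), so there is no in-paper argument to compare against. Your proof is the standard self-contained one and is essentially correct: identifying the circulant algebra with $R=\F_2[z]/(z^n-1)$ via $z\mapsto P$ and running B\'ezout yields everything, and your index bookkeeping does match the paper's convention, since $C_1(a_1,\dots,a_n)$ has second row $(a_n,a_1,\dots,a_{n-1})$, which is exactly $a_1I+a_2P+\cdots+a_nP^{n-1}$ for $P$ the shift matrix with $P_{i,i+1}=1$. The one step you assert but do not justify is the first ``iff'' of your second paragraph: invertibility of $C$ \emph{as a matrix} is not a priori the same as $f$ being a unit in $R$, because the matrix inverse need not obviously lie in the circulant subalgebra $\varphi(R)$. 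The fix is one line and sits naturally inside your own dichotomy: if $d=\gcd(F,z^n-1)\neq 1$, then $F\cdot\frac{z^n-1}{d}\equiv 0\pmod{z^n-1}$ while $\frac{z^n-1}{d}$ is a nonzero element of $R$, so $C\,\varphi\bigl(\tfrac{z^n-1}{d}\bigr)=0$ exhibits a nonzero matrix annihilated by $C$, hence $C$ is singular; combined with ``$\gcd=1$ implies a circulant two-sided inverse exists,'' this closes the equivalence without ever invoking ``matrix-invertible implies unit in $R$'' as a separate fact. (Equivalently, $\F_2^n$ with the $C$-action is the regular representation of $R$, so injectivity of the action forces $f$ to be a unit.) With that sentence added, the \emph{moreover} part goes through exactly as you wrote it.
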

	
	If $k=1$, then $C_k(a_1,\ldots,a_n)=C(a_1,\ldots,a_n)$.
	When $\gcd(k,n)=1$, then the rows of the $k$-circulant matrix $A$ cycle through every shift of the first row in {\em some order} and so, there is a permutation matrix $P$ such that $PA=PC_k(a_1,\ldots,a_n)=C(a_1,\ldots,a_n)$. Therefore, the invertibility of a $k$-circulant matrix $A$, under $\gcd(k,n)=1$, is described by Theorem~\ref{thm:genInv}. When $\gcd(k,n)>1$, a $k$-circulant matrix is never invertible.
	The following result follows easily.
	
	\medskip
	
	We recall now the notion of cyclotomic cosets.  If $\gcd(n,q)=1$, then the {\em cyclotomic coset of $q$ modulo $n$ containing $i$} is $C_i=\{i\cdot q^j\pmod n, j=0,1,\ldots \}$. A subset $\{i_1,\ldots, i_t\}$ is called a {\em complete set of representatives} of cyclotomic cosets of $q$ modulo $n$ if $C_{i_1},\ldots,C_{i_t}$ are distinct and $\displaystyle \mathbb{Z}_n=\cup_{j=1}^t C_{i_j}$. For example, the cyclotomic cosets of $2$ modulo $15$ are:
	\begin{itemize}
		\item $C_0=\{0\}$;
		\item  $C_1=\{1,2,4,8\}$;  
		\item $C_3=\{3,6,9,12\}$;
		\item $C_5=\{5,10\}$;
		\item $C_7=\{7,11,13,14\}$;
		\item the set $\{0,1,3,5,7\}$ is a complete set of representatives of cyclotomic cosets of $2$ modulo $15$.
	\end{itemize}
	While the next result can be found in literature (see, for example, \cite[Section 13.2.4]{MP13}), for the convenience of the reader we include a self-contained argument.
	\begin{theorem}
		\label{thm:noInv_circ}
		Let $n$ be odd, $m$ be the order of $2$ modulo $n$, $t$ be the number of distinct cyclotomic cosets of $2$ modulo $2^m-1$, of sizes   $\ell_1,\ell_2,\ldots, \ell_t$. The number $c_n$ of invertible circulant $n\times n$ matrices over $\F_2$ is
		\[
		c_n=(2^{\ell_1}-1)(2^{\ell_2}-1)\cdots (2^{\ell_t}-1).
		\]
		If $2^t\| n$  (that means that $2^t\,|\,n$ and $2^{t+1}\not| n$), $t>1$,  the
		number of invertible circulant $n\times n$ matrices over $\F_2$ is
		\[
		2^{\frac{n(2^t-1)}{2^t}} c_{\frac{n}{2^t}}.
		\]
	\end{theorem}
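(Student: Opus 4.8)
The plan is to reduce the count to the size of the unit group of the quotient ring $R_n:=\F_2[z]/(z^n-1)$. By Theorem~\ref{thm:genInv}, the assignment $C\mapsto F(z)$ sending a circulant matrix to its generating polynomial is a bijection from circulant $n\times n$ matrices over $\F_2$ onto polynomials of degree $<n$, under which $C$ is invertible exactly when $\gcd(F(z),z^n-1)=1$, i.e.\ exactly when $F$ represents a unit of $R_n$. Hence $c_n=\lvert R_n^\times\rvert$, and everything comes down to computing this order.

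For the first claim ($n$ odd), I would first observe that the formal derivative of $z^n-1$ is $z^{n-1}$, coprime to $z^n-1$, so $z^n-1=\prod_i p_i(z)$ is a product of \emph{distinct} monic irreducibles over $\F_2$. Then I would invoke the classical correspondence between irreducible factors of $z^n-1$ and cyclotomic cosets of $2$ modulo $n$ --- the factor attached to a coset $C$ is the minimal polynomial over $\F_2$ of $\zeta^j$ for $j\in C$, where $\zeta$ is a primitive $n$-th root of unity, and this factor has degree $\lvert C\rvert$ --- which identifies the multiset $\{\deg p_i\}$ with the multiset of sizes of cyclotomic cosets of $2$ modulo $n$. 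By the Chinese Remainder Theorem, $R_n\cong\prod_i\F_2[z]/(p_i)\cong\prod_i\F_{2^{\deg p_i}}$, so $c_n=\lvert R_n^\times\rvert=\prod_i\bigl(2^{\deg p_i}-1\bigr)$. Finally, since $m=\operatorname{ord}_n(2)$ gives $n\mid 2^m-1$, multiplication by $\tfrac{2^m-1}{n}$ embeds $\Z_n$ into $\Z_{2^m-1}$ commuting with multiplication by $2$ and preserving orbit sizes, and its image is the union of those cyclotomic cosets of $2$ modulo $2^m-1$ consisting of multiples of $\tfrac{2^m-1}{n}$. Transporting the product along this size-preserving bijection gives $c_n=\prod_{j=1}^{t}\bigl(2^{\ell_j}-1\bigr)$, where the $\ell_j$ are the sizes of precisely these cosets (equivalently, of the cyclotomic cosets of $2$ modulo $n$).

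For the second claim, I would write $n=2^s u$ with $u=n/2^s$ odd, the hypothesis being $2^s\,\|\,n$ with $s>1$ (I switch to $s$ to avoid a clash with the letter $t$). The Frobenius identity gives $z^n-1=(z^u-1)^{2^s}$ in $\F_2[z]$; since $u$ is odd, $z^u-1=\prod_i p_i$ with the $p_i$ distinct irreducible, hence $z^n-1=\prod_i p_i^{2^s}$ and, by the Chinese Remainder Theorem, $R_n\cong\prod_i\F_2[z]/(p_i^{2^s})$. Each factor $\F_2[z]/(p_i^{2^s})$ is a finite local ring with residue field $\F_{2^{d_i}}$, $d_i=\deg p_i$, whose maximal ideal is generated by the image of $p_i$; it has $2^{d_i2^s}$ elements, of which $2^{d_i(2^s-1)}$ lie in the maximal ideal, so its unit group has order $2^{d_i(2^s-1)}\bigl(2^{d_i}-1\bigr)$. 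Multiplying over $i$ and using $\sum_i d_i=\deg(z^u-1)=u$, together with the first claim in the form $\prod_i\bigl(2^{d_i}-1\bigr)=c_u$, yields
\[
c_n=\lvert R_n^\times\rvert=2^{(2^s-1)u}\prod_i\bigl(2^{d_i}-1\bigr)=2^{(2^s-1)u}\,c_u=2^{\frac{n(2^s-1)}{2^s}}\,c_{n/2^s}.
\]

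The argument is largely bookkeeping. The step needing genuine care will be the translation in the first claim between cyclotomic cosets modulo $n$ and modulo $2^m-1$ --- checking that the embedding $\Z_n\hookrightarrow\Z_{2^m-1}$ is $2$-equivariant, size-preserving on orbits, and has the stated image --- along with the elementary but easy-to-mishandle count of units in $\F_2[z]/(p^{e})$ via its nilpotent maximal ideal; one should also dispatch the degenerate cases (such as $n=1$, where $c_1=1$) so that empty products are read correctly.
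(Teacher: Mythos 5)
Your proof is correct, and for the odd case it follows essentially the same route as the paper's: both identify the ring of circulant matrices with $\F_2[z]/(z^n-1)$ via the generating polynomial, factor $z^n-1$ into distinct irreducibles whose degrees are the cyclotomic coset sizes, and apply the Chinese Remainder Theorem to obtain a product of finite fields whose unit group has order $\prod_j(2^{\ell_j}-1)$. (Your explicit check that the embedding $\Z_n\hookrightarrow\Z_{2^m-1}$ by multiplication by $(2^m-1)/n$ is $2$-equivariant and orbit-size-preserving is a welcome clarification of a point the theorem statement leaves implicit, since as written ``the number of distinct cyclotomic cosets of $2$ modulo $2^m-1$'' should really be read as the number of cosets modulo $n$, or equivalently the cosets modulo $2^m-1$ lying in the image of that embedding.) Where you genuinely diverge is the even case: the paper imports the doubling identity $c_{2s}=2^sc_s$ from MacWilliams and iterates it, summing the resulting geometric series in the exponent, whereas you compute $\lvert R_n^\times\rvert$ directly from the decomposition $z^n-1=(z^u-1)^{2^s}=\prod_i p_i^{2^s}$ into local factors $\F_2[z]/(p_i^{2^s})$, counting units as the complement of the nilpotent maximal ideal. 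Your route is self-contained and produces the closed form in one step; the paper's is shorter on the page but leans on an external reference for the key recursion. Both arguments are sound.
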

	\begin{proof}
		We shall use Theorem~\ref{thm:genInv} and what is known about the factorization of $x^n-1$ over a finite field to derive formulas about the number of invertible circulant matrices  over $\F_2$. 
		
		It is known that if $\alpha$ is a primitive element of $\F_{q^m}$ then the {\em minimal polynomial} of $\alpha^i$ with respect to $\F_q$ is
		\[
		m^{(i)}(x)=\prod_{j\in C_i} (x-\alpha^j),
		\]
		where $C_i$ is the unique cyclotomic coset of $q$ modulo $(q^m-1)$ containing $i$. For example, if $\alpha$ is a root of $x^2+x+2\in\F_3[x]$, $\F_{3^2}=\F_3(\alpha)$, then the minimal polynomial of $\alpha^2$ is $m^{(2)}(x)=(x-\alpha^2)(x-\alpha^6)$, since the cyclotomic coset of $q=3$ modulo $n=3^2-1$ containing $2$ is $C_2=\{2,6\}$. Moreover, the factorization of $x^n-1$ over $\F_q$ is therefore
		\[
		x^n-1=\prod_{i=1}^t m^{((q^m-1)s_i/n)}(x),
		\]
		where $m$ is the least integer such that $n\,|\, q^m-1$, and $s_1,\ldots, s_t$ is a complete set of representatives of $q$ modulo $n$.
		
		The circulant $n\times n$ matrices over $\F_2$ form a ring which is isomorphic to the factor ring $\F_2[x]/\langle x^n+1\rangle$. We apply the previous considerations  to our situation. We let $q=2$,  $n$ 
		odd, and $m$ be the order of $2$ modulo $n$, that is, the smallest integer such that $n\,|\, 2^m-1$. 
		Let $t$ be the number of distinct cyclotomic cosets, whose sizes we label by $\ell_1,\ell_2,\ldots, \ell_t$.
		Since $x^n+1$ can be factored over $\F_2$ as a product of irreducible polynomials of degree, which is the size of the cyclotomic cosets, we conclude that the ring $R$ is isomorphic to a product of fields $R\cong \F_{2^{\ell_1}}\times \cdots\times \F_{2^{\ell_t}}$. 
		A similar argument works for $n$ not coprime to the characteristic, since $c_{2s}=2^s c_s$ (see also~\cite{McW71}). Thus, if $2^t\| n$, so $n=2^t s$, then 
		\[
		c_n=2^{\frac{n}{2}} c_{\frac{n}{2}}=2^{\frac{n}{2}+\frac{n}{4}} c_{\frac{n}{4}}=\ldots=2^{\frac{n}{2}+\frac{n}{4}+\cdots+\frac{n}{2^t}} c_{\frac{n}{2^t}}.
		\]
		We can compress the exponent using the sum of a geometric sequence and the claims are shown.
	\end{proof}
	
	\begin{remark}\label{A003473}
		We can easily get the count for the number of invertible circulant matrices using the above theorem, and we display below this count for $1\leq n\leq 32$: 1, 2, 3, 8, 15, 24, 49, 128, 189, 480, 1023, 1536, 4095, 6272, 10125, 32768, 65025, 96768, 262143, 491520, 583443, 2095104, 4190209, 6291456, 15728625, 33546240, 49545027, 102760448, 268435455, 331776000, 887503681, 2147483648, 3211797501, 8522956800, 12325233375, 25367150592, 68719476735, 137438429184, 206007472125, which is OEIS sequence A003473~\cite{OEIS}. 
	\end{remark}
	
	
	
	The following observation may be useful in the consideration of Question~\ref{affine-vs-cyclic}. The number of invertible circulant $n\times n$-matrices is a multiple of $n$, since shifting all elements of the matrix one step to the right produces another invertible circulant matrix. Thus, the number of invertible circulant matrices divided by $n$ is the number of ``basis cycles'', that is,  cycles that spans the whole $\F_2^n$. More generally, one may consider the dimension of the span of a cycle, and note that a bijective cyclic linear transformation must take a cycle to a cycle with the same spanning dimension.
	
	\medskip
	
	If two shift-invariant bijections $F$ and $G$ are affine equivalent, what can we say about the equivalences of the corresponding Boolean function $f$ and $g$? This result gives a precise condition for cyclic equivalence.
	
	\begin{theorem}
		\label{cyclic-equivalence-boolean}
		Let $f$ and $g$ be two Boolean functions inducing shift-invariant functions $F$ and $G$.
		Then $F$ and $G$ are cyclically equivalent if and only if there are an invertible cyclic matrix $A$,
		a vector $(b_j)_{j=1}^n$ with the property that any cyclic matrix it defines is invertible, 
		$d'\in\{0,1\}$, and $e\in\{(0,0,\dotsc 0),(1,1,\dotsc,1)\}$ such that
		\begin{equation}\label{cyclic-boolean}
			\sum_{j=1}^n b_j(g\circ S^{1-j})(x) + d' = f(Ax+e).
		\end{equation}
	\end{theorem}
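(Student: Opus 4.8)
The plan is to reduce everything to two facts already established: a $k$-shift-invariant map with $\gcd(k,n)=1$ is completely determined by its first coordinate function (Lemma~\ref{gcd-k-n-1}), and a cyclic matrix with parameter $k$ coprime to $n$ is, after a permutation of its rows, the ordinary circulant built from its first row, so its invertibility is governed by Theorem~\ref{thm:genInv}. Everything else is bookkeeping of the shift parameter, which has to be shared by the two intertwining matrices.

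I would begin by unwinding the first coordinate. Writing $F=(f,f\circ S^{-1},\dots,f\circ S^{-(n-1)})$ and likewise for $G$, one has, for any matrix $B$ with first row $(b_1,\dots,b_n)$, that $(BG(x))_1=\sum_{j=1}^n b_j\,(g\circ S^{1-j})(x)$, while $(F(Ax+e))_1=f(Ax+e)$. Hence, in the forward direction, if $F(Ax+e)=BG(x)+d$ with $A,B$ invertible cyclic and $d,e\in\{(0,\dots,0),(1,\dots,1)\}$, then taking first coordinates and letting $(b_j)$ be the first row of $B$ and $d'$ the common entry of the constant vector $d$ produces \eqref{cyclic-boolean} at once. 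The only thing left is to verify that $(b_j)$ has the required invertibility property: since $B$ is invertible and cyclic, $BS=S^kB$ for some $k$, and $\gcd(k,n)=1$ because bijectivity of a $k$-shift-invariant map forces this; for such $k$ the rows of $B$ run through all $n$ cyclic shifts of $(b_j)$, so $B=P\,C(b_1,\dots,b_n)$ for a permutation matrix $P$, and invertibility of $B$ forces invertibility of the ordinary circulant $C(b_1,\dots,b_n)$, which by Theorem~\ref{thm:genInv} is exactly the stated condition.

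For the converse I would, given \eqref{cyclic-boolean}, fix $k$ with $AS=S^kA$ (so $\gcd(k,n)=1$ as $A$ is invertible), and take $B$ to be the unique matrix with $BS=S^kB$ whose first row is $(b_j)$; this $B$ exists and is pinned down because $\gcd(k,n)=1$, it is cyclic by the matrix lemma, and it is invertible by the hypothesis on $(b_j)$ together with the row-permutation description. Setting $d=d'(1,\dots,1)$ and $\widetilde F(x)=F(Ax+e)$, $\widetilde G(x)=BG(x)+d$, a short computation using $AS=S^kA$, $BS=S^kB$, the shift-invariance of $F,G$, and $S^ke=e$, $S^kd=d$ shows $\widetilde F S=S^k\widetilde F$ and $\widetilde G S=S^k\widetilde G$; by the first-coordinate computation, \eqref{cyclic-boolean} says precisely that $\widetilde F$ and $\widetilde G$ have the same first coordinate function, so Lemma~\ref{gcd-k-n-1} gives $\widetilde F=\widetilde G$, i.e.\ $F(Ax+e)=BG(x)+d$, and $F,G$ are cyclically equivalent.

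The delicate point, and the one I would be most careful about, is the matching of shift parameters: $B$ must be built with the \emph{same} $k$ as $A$, since otherwise $\widetilde F$ and $\widetilde G$ are determined by their first coordinates through different shift formulas; and one must know that, for $\gcd(k,n)=1$, prescribing the first row of a matrix intertwining $S$ with $S^k$ both determines the matrix and makes invertibility equivalent to invertibility of the ordinary circulant on that row. Both facts reduce to the single observation that such a matrix has the $n$ cyclic shifts of its first row as its rows; granting it, the rest of the argument is the first-coordinate identity and an application of Lemma~\ref{gcd-k-n-1}.
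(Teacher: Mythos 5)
Your proposal is correct and follows essentially the same route as the paper: read off \eqref{cyclic-boolean} from the first coordinate with $(b_j)$ the first row of $B$, and in the converse reconstruct $B$ from $(b_j)$ using the shift degree of $A$ and match all remaining coordinates. The only cosmetic difference is that you package the ``all coordinates agree because the first one does'' step as an appeal to Lemma~\ref{gcd-k-n-1} after checking $\widetilde F$ and $\widetilde G$ are $k$-shift-invariant, whereas the paper writes out the $i$'th coordinates explicitly and substitutes $y=S^{(1-i)k}x$; these are the same computation.
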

	
	\begin{proof}
		Let $F$ and $G$ be cyclically equivalent, i.e., there exist $A,B,d,e$ such that $F(Ax+e)=BG(x)+d$ for all $x$.
		Recall that $A$ and $B$ must be invertible and cyclic of the same degree, i.e., there is a $k$ (which is here equal to the inverse of the degree) such that $AS^k=SA$ and $BS^k=SB$.
		Then the $i$'th coordinate function $f_i$ for the map $x\mapsto F(Ax+e)$ is given by
		\[
		f_i(x)=(f\circ S^{1-i})(Ax+e)=f(AS^{(1-i)k}x+e).
		\]
		Let $(b_j)_{j=1}^n$ be the first row of $B$.
		To compute the $i$'th coordinate function $g_i$ for the map $x\mapsto BG(x)+d$,
		we first note that the $i$'th row of $B$ has $b_1$ in column $1+(i-1)k$.
		Thus $g_i$ is given by
		\[
		\begin{split}
			g_i(x) &= (\textup{row $i$ of $B$}) \cdot (g\circ S^{1-i})(x)+d' \\
			&= \sum_{j=1}^n b_j(g\circ S^{1-j-(1-(1+(i-1)k))})(x) +d'=\sum_{j=1}^n b_j(g\circ S^{1-j+(1-i)k})(x) +d',
		\end{split}
		\]
		where $d'$ is the first entry of $d$.
		For each $i$, we replace $S^{(1-i)k}x$ by $y$,
		and then $f_i(y)=g_i(y)$ coincides with the equation \eqref{cyclic-boolean}.
		
		For the converse direction, if \eqref{cyclic-boolean} holds,
		then we construct $B$ from letting $(b_j)_{j=1}^n$ be its first row and shift by $k$, where $k$ is the degree of $A$,
		and $d$ is constructed the obvious way from $d'$.
		Then define $f_i(x)=(f\circ S^{1-i})(Ax+e)$ and $g_i(x)=(\textup{row $i$ of $B$}) \cdot (g\circ S^{1-i})(x)+d'$.
		The above computations give that $f_i=g_i$ for all $i$, hence $F(Ax+e)=BG(x)+d$.
	\end{proof}
	
	The above result also holds when $F$ and $G$ are $k$-shift-invariant with $\gcd(k,n)=1$, just by replacing $S^{1-j}$ by $S^{(1-j)m}$ in \eqref{cyclic-boolean}, where $m$ is the inverse of $k$.
	
	\medskip
	
	Recall that two Boolean functions $f$ and $g$ are said to be affine equivalent if there is an invertible matrix $A$ and constants $d,e\in\mathbb{F}_2^n$ such that $g(x)+d=f(Ax+e)$.
	It is also known~\cite[Lemma 6.46]{CS17} that in the quadratic  case, then $f$ is equivalent to $g$ if and only if $N_f=N_g$ and $\operatorname{wt}(f)=\operatorname{wt}(g)$.
	
	\begin{definition}
		We say that two functions $F$ and $G$ are strongly affine equivalent if there is an invertible matrix $A$ such that
		\[
		F(Ax+e)=G(x)+d
		\]
		for some $d,e\in\{(0,0,\dotsc 0),(1,1,\dotsc,1)\}$.
	\end{definition}
	
	\begin{lemma}
		Two cyclic bijections $F$ and $G$ are strongly affine equivalent if and only if the corresponding Boolean functions $f$ and $g$ satisfies
		\[
		f(Ax+e)=g(x)+d
		\]
		for some circulant matrix $A$, $d'\in\{0,1\}$, and $e\in\{(0,0,\dotsc 0),(1,1,\dotsc,1)\}$.
	\end{lemma}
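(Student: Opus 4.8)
The plan is to recognise that strong affine equivalence of $F$ and $G$ is exactly the special case of cyclic equivalence in which the output matrix $B$ is the identity, and then to translate the statement into one about first coordinate functions; this can be done either by specialising Theorem~\ref{cyclic-equivalence-boolean} or, just as quickly, by a direct computation with the coordinate functions $F=(f,f\circ S^{-1},\dots)$ and $G=(g,g\circ S^{-1},\dots)$.

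For the forward implication, suppose $F(Ax+e)=G(x)+d$ with $A$ invertible and $d,e\in\{(0,\dots,0),(1,\dots,1)\}$. The first thing I would establish is that $A$ is automatically circulant: replacing $x$ by $Sx$ and using the shift-invariance of $F$ and $G$ together with $Sd=d$ and $Se=e$ yields $F(ASx+e)=SF(Ax+e)=F(SAx+e)$ for every $x$, so injectivity of $F$ gives $ASx=SAx$ for all $x$, i.e.\ $AS=SA$; by the characterisation of cyclic matrices this says exactly that $A$ is circulant. Once this is in hand, comparing the first coordinates on the two sides of $F(Ax+e)=G(x)+d$ — the first coordinate of $F(Ax+e)$ is $f(Ax+e)$, that of $G(x)+d$ is $g(x)+d'$ with $d'$ the first entry of $d$ — produces precisely $f(Ax+e)=g(x)+d'$.

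For the converse, assume $f(Ax+e)=g(x)+d'$ with $A$ circulant and invertible, $d'\in\{0,1\}$, and $e\in\{(0,\dots,0),(1,\dots,1)\}$. Since a circulant $A$ commutes with every $S^{1-i}$ and $S^{1-i}e=e$, the $i$th coordinate function of $x\mapsto F(Ax+e)$ is $(f\circ S^{1-i})(Ax+e)=f(AS^{1-i}x+e)$, which by the hypothesis applied at $S^{1-i}x$ equals $g(S^{1-i}x)+d'=(g\circ S^{1-i})(x)+d'$, i.e.\ the $i$th coordinate of $G(x)+d$ with $d=(d',\dots,d')$; hence $F(Ax+e)=G(x)+d$, which is strong affine equivalence. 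Equivalently, one applies Theorem~\ref{cyclic-equivalence-boolean} with the vector $(b_j)_{j=1}^n=(1,0,\dots,0)$, which defines the invertible circulant matrix $I$ and makes \eqref{cyclic-boolean} read $g(x)+d'=f(Ax+e)$, so the theorem outputs a cyclic equivalence implemented by $A$ and $B=I$.

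The only point that genuinely requires a small argument is the automatic circulancy of $A$ in the forward direction; the rest is a routine unwinding of coordinate functions, parallel to the proof of Theorem~\ref{cyclic-equivalence-boolean}. As in the remark following that theorem, the same proof carries over verbatim when $F$ and $G$ are merely $k$-shift-invariant with $\gcd(k,n)=1$, replacing $S^{1-i}$ by $S^{(1-i)m}$ with $m$ the inverse of $k$ modulo $n$.
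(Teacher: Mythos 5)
Your proof is correct and follows essentially the same route as the paper's: both reduce the vectorial identity $F(Ax+e)=G(x)+d$ to the scalar identity on the (first) coordinate functions by commuting the circulant matrix $A$ past the shifts $S^{1-i}$. The only difference is in one step: you derive the circulancy of $A$ directly from injectivity of $F$ via $F(ASx+e)=F(SAx+e)$, whereas the paper obtains it by noting that $x\mapsto F^{-1}(G(x)+d)=Ax+e$ is cyclic and affine and invoking its earlier characterisation of cyclic matrices; your version is, if anything, slightly cleaner at that point.
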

	
	\begin{proof}
		Note first that if $F$ and $G$ are strongly affine equivalent, then
		\[
		Ax+e=F^{-1}G(x)+F^{-1}d,
		\]
		so $F^{-1}G(x)$ is cyclic and affine, that is, equals $Dx+b$ for some $k$-circulant matrix $D^T$ and $b\in\{(0,0,\dotsc 0),(1,1,\dotsc,1)\}$. Thus $A^T$ is also $k$-circulant and $d,e\in\{(0,0,\dotsc 0),(1,1,\dotsc,1)\}$.
		
		Moreover, the $i$'th coordinate functions of $F$ and $G$ are given by
		\[
		f\circ S^{(1-i)k}(Ax+e)=f(AS^{(1-i)k}x+e) \quad\text{ and }\quad g\circ S^{(1-i)k}(x)+d',
		\]
		where $d'$ is the constant entry of $d$.
		Since these are equal for all $y=S^{-ik}x$, we must have that $f(Ay+e)=g(y)+d'$, so $f$ and $g$ are affine equivalent.
		
		The converse is similar.
	\end{proof}
	
	\begin{question}
		Let $F$ and $G$ be shift-invariant bijections with corresponding Boolean functions $f$ and $g$ and define the Boolean function $g_b$ by \eqref{cyclic-boolean}, that is,
		\[
		g_b(x) = \sum_{j=1}^n b_j(g\circ S^{1-j})(x) + d'.
		\]
		Clearly, if $b=e_1$, then $g=g_b+d'$, but is it possible that $g$ and $g_b$ are affine equivalent also when this is not the case?
		It follows from Theorem~\ref{cyclic-equivalence-boolean} that affine equivalence of $f$ and $g$ and cyclic equivalence of $F$ and $G$ are related via affine equivalence of $g$ and $g_b$.
	\end{question}

	\subsection{Does affine equivalence imply cyclic equivalence?}
	
	The following discussion is meant to shed some light on Question~\ref{affine-vs-cyclic}, and to simplify notation, we ignore the constants and say that $F$ and $G$ are linearly equivalent if there exist invertible matrices $A$ and $B$ such that $FA=BG$.
	
	If $FA=BG$ for some $A,B$, there are also many other pairs of matrices $A',B'$ such that $FA'=B'G$. If $F$ and $G$ are cyclic and $FA=BG$, the question is whether one can always find a pair of cyclic matrices $A',B'$ such that $FA'=B'G$. Clearly, if $FA=BG$, this also holds when the maps are restricted to the set of cycles. While $F$ and $G$ potentially can have any possible cycle map, for linear transformations the number of possible cycle maps is fairly small, so there must be a strong relationship between the cycle maps of $F$ and $G$.
	
	Suppose that $F$ and $G$ are two linearly equivalent shift-invariant bijections.
	We define the intertwiner sets
	\[
	\begin{split}
		\operatorname{int}(F,G) &= \{ (A,B) \in\operatorname{GL}(n,\F_2)\times\operatorname{GL}(n,\F_2) : FA=BG \} \\
		\operatorname{int}(F) &= \{ (A,B) \in\operatorname{GL}(n,\F_2)\times\operatorname{GL}(n,\F_2) : FA=BF \} \\
		\operatorname{int}(G) &= \{ (A,B) \in\operatorname{GL}(n,\F_2)\times\operatorname{GL}(n,\F_2) : GA=BG \}.
	\end{split}
	\]
	We may note that the last two sets are actually groups under the pointwise product $(A,B)(A',B')=(AA',BB')$.
	Moreover, suppose that $(A,B)$ and $(A',B')$ belong to $\operatorname{int}(F,G)$.
	Then
	\[
	FA'A^{-1}=B'GA^{-1}=B'B^{-1}F,
	\]
	that is, the pair $(A'A^{-1},B'B^{-1})$ belongs to $\operatorname{int}(F)$.
	Similarly, $(A^{1}A',B^{-1}B)$ belongs to $\operatorname{int}(G)$.
	
	In other words, there is a one-to-one-correspondence (bijection) between any two of the above sets.
	Indeed, fix some $(A,B)\in \operatorname{int}(F,G)$ and consider the map
	\[
	\operatorname{int}(F) \to \operatorname{int}(F,G), \quad (A_F,B_F) \mapsto (A_FA,B_FB).
	\]
	It is also straightforward to compute that if $(A,B)\in\operatorname{int}(F,G)$, $(A_F,B_F)\in\operatorname{int}(F)$, and $(A_G,B_G)\in\operatorname{int}(G)$,
	then $(A_FAA_G,B_FBB_G)\in\operatorname{int}(F,G)$, and thus
	\[
	\operatorname{int}(F,G) = \operatorname{int}(F)\cdot (A,B) = (A,B) \cdot \operatorname{int}(G) = \operatorname{int}(F)\cdot (A,B)\cdot \operatorname{int}(G).
	\]
	For example, if $F=I$, and $FA=BG$, then $G=B^{-1}A$ must be linear and cyclic,
	and in this case, $\operatorname{int}(F) = \{ (A,A) : A \in\operatorname{GL}(n,\F_2) \}$.
	
	The above at least indicates that $\operatorname{int}(F,G)$ is typically a fairly large set.

	\section{New classes found computationally}
	\label{sec:comp}
	
	In what follows, when $f$ is a Boolean function in (at most) $k$ variables and $k\leq m$, we let $\operatorname{inv}(f)\cap \{k,k+1,\dotsc,m\}$ be denoted by $\operatorname{inv}_m(f)$. There are probably a great deal of overlap between the classes we present and the ones listed in \cite[Appendix~A.3]{JDA-thesis} with a different description, but there are certainly examples below that are not in \cite{JDA-thesis}.
	
	There are 82 nonlinear functions that are $(5,n)$-liftings for some $7\leq n\leq 15$ and coming from a Boolean function with only quadratic terms.
	We display in Table~\ref{table:lift1} the equivalence class representatives with at most five quadratic terms. We will also compute nonlinearity (nl), differential/boomerang uniformity (DU/BU) and plateaued (p) or non-plateaued property for $n=7$ in the first and third table below, and for $n=9$ in the middle one. Recall that the largest nonlinearity for $n=7$ is $48$ (some of our examples below achieve that). To have a benchmark, we compute these parameters for the Patt function, $x_1+x_1x_2+x_3$, for $n=7$, obtaining $(nl,p,DU,BU)=(32,1,32,96)$ (we put $1$ in the second position if the function is plateaued, otherwise $0$).
	
	\begin{table}[h]
		\centering
		\begin{tabular}{c|l|l}
			$\operatorname{inv}_{15}(f)$ & \text{polynomial} & (nl,p,DU,BU) \\ \hline
			\text{odd} & $x_1+x_3+x_1x_2$ & (32,1,32,96) \\
			$n \nmid 4$ & $x_1+x_5+x_1x_3$ & (32,1,32,96) \\
			\text{odd} & $x_1+x_2+x_2x_3$ & (32,1,32,96) \\
			\text{odd} & $x_2+x_4+x_2x_3$ & (32,1,32,96) \\
			7 & $x_1 + x_5 + x_1x_4$ & (32,1,32,96) \\
			5,7 & $x_1 + x_2 + x_1x_5$ & (32,1,32,96) \\
			$n \nmid 2,3$ & $x_2 + x_5 + x_1x_2 + x_2x_3 + x_3x_4$ & (32,1,32,96)  \\	
			7 & $x_1 + x_5 + x_1x_2 + x_2x_4 + x_3x_4$ & (48,1,16,44) \\	
			5,6,7 & $x_2 + x_1x_3 + x_2x_4 + x_1x_5 + x_3x_5$ & (48,1,8,48) \\	
			7 & $x_2 + x_1x_3 + x_1x_4 + x_2x_5 + x_3x_5$ & (0,1,16,80) \\
			7 & $x_2 + x_4 + x_5 + x_1x_2 + x_1x_3 + x_1x_4 + x_2x_4$ 
			& (48,1,8,24)\\
			$n \nmid 2,3$ & $x_3 + x_5 + x_1x_2 + x_1x_3 + x_1x_4 + x_2x_4 + x_3x_4$ & (32,1,32,96) \\
			5,7 & $x_1 + x_4 + x_1x_2 + x_1x_3 + x_1x_4 + x_2x_4 + x_2x_5$ & (32,1,16,128) \\
			6,7 & $x_1 + x_2 + x_3 + x_4 + x_1x_3 + x_2x_3 + x_2x_4 + x_3x_4 + x_3x_5$ & (48,1,8,40)
		\end{tabular}
		\caption{Equivalence class representatives for $(5,n)$-liftings, $7\leq n\leq 15$, with at most five quadratic terms}
		\label{table:lift1}
	\end{table}

	We note that in Table~\ref{table:lift1}, the function $x_2 + x_5 + x_1x_2 + x_2x_3 + x_3x_4$ is essentially equivalent to the one from Theorem~\ref{k-plus-2}, for $k=5$.
	
	Next, Table~\ref{table:lift2} is, up to essential equivalence, a complete list of all nonlinear $(5,9)$-liftings that come from a Boolean function with only quadratic terms:
	\begin{table}[h]
		\centering
		\begin{tabular}{c|l|l}
			$\operatorname{inv}_{15}(f)$ & \text{polynomial} & (nl,p,DU,BU) \\ \hline
			\text{odd} & $x_1+x_3+x_1x_2$ & (128,1,128,448) \\
			$n \nmid 4$ & $x_1+x_5+x_1x_3$ & (128,1,128,448)  \\
			\text{odd} & $x_1+x_2+x_2x_3$ & (128,1,128,448)  \\
			\text{odd} & $x_2+x_4+x_2x_3$ &  (128,1,128,448)  \\
			6,9 & $x_3+x_1x_2+x_2x_4+x_1x_5+x_4x_5$ & (128,1,512,512)  \\
			9 & $x_1+x_4+x_2x_5$ & (128,1,128,448) 
		\end{tabular}
		\caption{All generating Boolean functions  with only quadratic terms, giving  nonlinear $(5, 9)$-liftings}
		\label{table:lift2}
	\end{table}

	There are 92 function that are $(5,7)$-liftings coming from a Boolean function with exactly one cubic term (and no term with higher degree), divided into 23 essential equivalence classes, and here is a complete list of representatives in Table~\ref{table:lift3}.
	
	\allowdisplaybreaks
	\begin{table}[ht]
		\centering
		\begin{tabular}{c|l|l}
			$\operatorname{inv}_{15}(f)$ & \text{polynomial} & (nl,p,DU,BU) \\ \hline
			$n \nmid 3$	& $x_4 + x_1x_2 + x_1x_2x_3$ & (16,0,56,100) \\
			$n \nmid 3$	& $x_4 + x_2x_3 + x_1x_2x_3$ & (16,0,56,94) \\
			$n \nmid 3$	& $x_2 + x_5 + x_1x_2 + x_2x_4 + x_1x_2x_3$ & (32,0,32,104)\\
			7	& $x_1 + x_4 + x_1x_2 + x_1x_5 + x_1x_2x_3$ & (32,0,20,56)\\
			6,7,11	& $x_5 + x_1x_2 + x_1x_2x_4$ & (16,0,54,86)\\
			\text{all}	& $x_3 + x_1x_4 + x_1x_2x_4$ & (16,0,54,94) \\
			\text{odd}	& $x_2 + x_3 + x_4 + x_5 + x_2x_3 + x_1x_4 + x_1x_2x_4$ & (32,0,24,50)\\
			\text{odd}	& $x_3 + x_2x_4 + x_1x_2x_4$ & (16,0,54,86) \\
			\text{odd}	& $x_5 + x_2x_4 + x_1x_2x_4$ & (16,0,54,94)\\
			$n \nmid 6$	& $x_4 + x_5 + x_2x_4 + x_3x_4 + x_1x_2x_4$ & (32,0,28,96)\\
			\text{odd}	& $x_3 + x_4 + x_1x_4 + x_4x_5 + x_1x_2x_4$ & (32,0,28,80)\\
			\text{odd}	& $x_4 + x_1x_5 + x_1x_2x_5$ & (16,0,56,100) \\
			7	& $x_4 + x_2x_5 + x_1x_2x_5$ & (16,0,56,100) \\
			7	& $x_1 + x_2 +x_3 +x_4 + x_5 + x_1x_3 + x_2x_5 + x_3x_5 + x_1x_2x_5$ & (32,0,30,30)\\
			$n \nmid 3$	& $x_1 + x_2x_3 + x_2x_3x_4$ & (16,0,56,100) \\
			$n \nmid 3$	& $x_5 + x_2x_3 + x_2x_3x_4$ & (16,0,56,100) \\
			\text{odd}	& $x_2 + x_1x_3 + x_1x_3x_4$ & (16,0,54,86) \\
			\text{odd}	& $x_5 + x_1x_3 + x_1x_3x_4$ & (16,0,54,90) \\
			\text{all}	& $x_2 + x_1x_4 + x_1x_3x_4$ & (16,0,54,94) \\
			$n \nmid 4$	& $x_5 + x_1x_4 + x_1x_3x_4$ & (16,0,54,90) \\
			6,7,11	& $x_5 + x_3x_4 + x_1x_3x_4$ & (16,0,54,90) \\
			\text{odd}	& $x_1 + x_3 + x_4 + x_1x_2 + x_1x_3 + x_4x_5 + x_1x_3x_4$ & (40,0,24,56)\\
			\text{odd}	& $x_1 + x_5 + x_1x_2 + x_2x_3 + x_1x_4 + x_4x_5 + x_1x_3x_4$ & (40,0,24,52)\\
		\end{tabular}
		\caption{Representative classes for generating functions of $(5, 7)$-liftings, having exactly one cubic term (and no term with higher degree)}
		\label{table:lift3}
	\end{table}

	\section{Conclusion and future research}
	\label{last_comments}
	
	We have considered several different aspects related to rotation-symmetric permutations and in particular $(k,n)$-liftings. Although this is a topic previously studied by several authors, we provide new insight by constructing some new families and describing several open questions that have only barely been touched upon, and give some partial answers. As a byproduct, we also point out some results that can be misinterpreted in the previous related literature. 
	
	\bigskip
	
	There are several possibilities for future research, and many questions are described at the end of Section~\ref{liftings}. In particular, we want to emphasize Question~\ref{affine-vs-cyclic} concerning affine and cyclic equivalence, which is quite intriguing and seemingly challenging, but surely worth looking into.
	
	Another direction is generalize the results by considering liftings over more general fields of the type $\F_{p^k}$.
	
	\section*{Acknowledgements}
	
	The authors would like to thank Zilin Jiang for the observation leading to the proof of Theorem~\ref{zilin-observation} and to Petter Nyland for providing some helpful comments.
	
	\bibliographystyle{plain}

\end{document}